\documentclass[11pt]{article}
\usepackage{amsmath, amsthm, amssymb,mathtools}
\usepackage[sort]{cite}

\usepackage{geometry}
\usepackage{hyperref}

\usepackage{enumitem}

\title{Large friction limit of the almost pressureless Euler-Poisson system}

\author{
Xin Liu\thanks{Texas A\&M University, Department of Mathematics, College Station, TX 77843, USA. Email: \href{mailto:xliu23@tamu.edu}{xliu23@tamu.edu}}}

\date{\today}

\theoremstyle{plain}
\newtheorem{theorem}{Theorem}[section]

\newtheorem{corollary}[theorem]{Corollary}

\theoremstyle{definition}
\newtheorem{definition}[theorem]{Definition}

\theoremstyle{remark}
\newtheorem{remark}[theorem]{Remark}

\newcommand{\mrm}[1]{\mathrm{#1}}

\newcommand{\dt}{\partial_t}
\newcommand{\dx}{\partial_x}
\newcommand{\dtau}{\partial_\tau}
\newcommand{\dv}{\mathrm{div}\, }
\renewcommand{\vec}[1]{{\bf #1}}

\newcommand{\idx}{\ d\vec x}
\newcommand{\norm}[2]{\Vert #1 \Vert_{#2}}

\allowdisplaybreaks
\numberwithin{equation}{section}

\begin{document}

\maketitle

\begin{abstract}
The goal of this work is to investigate the almost pressureless Euler-Poisson (EP) system with repulsive force in the large friction limit. The leading order equations in the limit are shown to be the hyperbolic-elliptic Keller-Segel (KS) system of consumption type. Under suitable assumptions on the initial data, we establish the unique global-in-time solutions to both the EP system and the KS system by establishing the global stability in the large friction limit. In particular, no singularity forms in the asymptotic limit. Moreover, the time asymptotic behavior of the one-dimensional KS flow with vacuum is also discussed. 

\smallskip 

\noindent {\bf Keywords:} Large friction limit, Euler-Poisson system, Keller-Segel system, Global stability. 

\smallskip

\noindent {\bf MSC2020:} 35D35; 76N10; 76S99; 76X05.
\end{abstract}

\tableofcontents

\section{Introduction}

\subsection{The systems under consideration and motivation}

Let $ \rho \in \mathbb R $, $\vec u \in \mathbb R^3$, and $ \phi \in \mathbb R $ be the density, velocity, and the electric potential, respectively. 
The almost pressureless Euler-Poisson (EP) system with large friction is given by the following system of equations: 
\begin{subequations}
        \label{sys:PLEP}
    \begin{align}
        \label{eq:PLEP-1}
        \dt \rho + \dv (\rho \vec u) & =0, & \text{in} \ \Omega, \\
        \label{eq:PLEP-2} 
        \dt (\rho \vec u) + \dv (\rho \vec u \otimes \vec u) + \varepsilon^\alpha \nabla p + \frac{1}{\varepsilon} \rho \vec u + \rho \nabla \phi & = 0, & \text{in} \ \Omega, \\
        \label{eq:PLEP-3}
        - \Delta \phi & = \rho - M, & \text{in} \ \Omega,
    \end{align}
    where $ p = \rho^\gamma $, $ \gamma > 1 $ is the pressure potential.
\end{subequations} 
Here $ \alpha > 0 $ is a fixed parameter. $ M > 0 $ is the constant background charge. $ 0 < \varepsilon \ll 1 $ is a small parameter. As $ \varepsilon \rightarrow 0 $, system \eqref{sys:PLEP} is said to be in the large friction, almost pressureless regime. 

System \eqref{sys:PLEP} models the dynamics of a plasma (charged particles) in an electric field generated by the background charge $ M $. It should be noted that system \eqref{sys:PLEP} is the Euler-Poisson system with repulsive force $\rho \nabla\phi$. A similar system with attracting force, i.e., $ - \rho \nabla \phi$ in \eqref{eq:PLEP-2}, is also of great importance, and is used to model the dynamics of self-gravitation systems, such as gaseous stars. 

Furthermore, consider the time scale $ t \simeq 1/\varepsilon $ by introducing the following rescaling of time:
\begin{equation}
    \label{eq:rescale-time}
    t =  \tau/\varepsilon.
\end{equation}
Then the system \eqref{sys:PLEP} can be rewritten as:
\begin{subequations}
    \label{sys:PLEP-rescaled}
    \begin{align}
        \label{eq:rsPLEP-1}
        \dtau \rho + \frac{1}{\varepsilon}\dv (\rho \vec u) & =0, & \text{in} \ \Omega , \\
        \label{eq:rsPLEP-2} 
        \dtau (\rho \vec u) + \frac{1}{\varepsilon}\dv (\rho \vec u \otimes \vec u) + \frac{1}{\varepsilon^{1-\alpha}} \nabla p + \frac{1}{\varepsilon^2} \rho \vec u + \frac{1}{\varepsilon}\rho \nabla \phi & = 0, & \text{in} \ \Omega , \\
        \label{eq:rsPLEP-3}
        - \Delta \phi & = \rho - M, & \text{in} \ \Omega .
    \end{align}
\end{subequations}

\smallskip 

Formally, one can check that the leading order dynamics of system \eqref{sys:PLEP-rescaled} as $ \varepsilon \rightarrow 0 $ is given by the following hyperbolic-elliptic Keller-Segel (KS) system of consumption type: 
\begin{subequations}
    \label{sys:PLEP-limit}
    \begin{align}
        \label{eq:rsPLEPlt-1}
        \dtau \sigma - \dv (\sigma \nabla \psi) & =0, & \text{in} \ \Omega \times (0,\infty), \\
        \label{eq:rsPLEPlt-3}
        - \Delta \psi & = \sigma - M, & \text{in} \ \Omega \times (0,\infty),
    \end{align}
    or equivalently,
    \begin{equation}{\tag{\ref{sys:PLEP-limit}'}}
        \label{eq:rsPLEPlt-limit}
        \dtau \sigma + \vec v \cdot \nabla \sigma + \sigma (\sigma-M) = 0, \quad \text{with} \quad \vec v = - \nabla (-\Delta)^{-1}(\sigma-M).
    \end{equation}
    where formally, $ \sigma = \lim_{\varepsilon\to 0} \rho $ and $ \psi = \lim_{\varepsilon\to 0} \phi $. 
    \end{subequations}
    Here $ \sigma $ is referred to as the bacteria density, and $ \psi $ is referred to as the chemical potential for the nutrition in the literature. System \eqref{sys:PLEP-limit} is the Keller-Segel system of consumption type. A similar system with opposite sign in front of the divergence in \eqref{eq:rsPLEPlt-1} is referred to as the KS system of production type. 

    \smallskip 

    For suitable and smooth initial data, the local well-posedness of both system \eqref{sys:PLEP-rescaled} (fixed $ \varepsilon $) and system \eqref{sys:PLEP-limit} is classical. However, for general initial data, the classical solutions to both systems are expected to blow up in finite time. The case with attractive force/production type are similar. We defer the review of the literatures in section \ref{sec:background}, below.
    
    Meanwhile, for the long time behavior of solutions to \eqref{eq:rsPLEPlt-limit}, it is easy to see that as long as the $ \sigma > 0 $, i.e., $ \sigma $ is non-vanishing, one can obtain a global solution. 
    In fact, $ \sigma_{e,1} := M $ is the globally stable equilibrium of \eqref{eq:rsPLEPlt-limit} and the any solution, if exists, converges to $ \sigma_{e,1} $ as $ \tau \to \infty $. Our result in this work shows that in the one space dimension, there exist global smooth solutions with generic initial data, and in the three space dimension, an additional condition for initial data is required for the system to generate global smooth solutions. On the other hand, if $ \sigma $ is vanishing in some regime, the global well-posedness is a bit subtle, since $ \sigma_{e,2} :=0 $ is an unstable equilibrium of \eqref{eq:rsPLEPlt-limit}. In this case, the global solution, if exists, may not be smooth. We will discuss the behavior of the one-dimensional flow in section \ref{sec:1-d-discussion}, below. 

    The above observation motivates the study of the high friction limit of the almost pressureless Euler-Poisson system \eqref{sys:PLEP-rescaled} with initial data $ (\rho_0, \vec u_0) $ close to the globally smooth solution of the limit system \eqref{sys:PLEP-limit}, i.e, 
    \begin{equation}
        \label{eq:initial-data}
            \rho_0 > 0, \ \vec u_0 \simeq 0.
    \end{equation}
    We defer the detailed description of the initial date to section \ref{sec:main-results}, below.
    
    \subsection{Background and literatures} 
    \label{sec:background}

    \subsubsection*{The Euler-Poisson system}

    The EP system with repulsive force consists of the conservation of mass \eqref{eq:PLEP-1}, the conservation of momentum/Newton's second law \eqref{eq:PLEP-2}, and the electric potential equation \eqref{eq:PLEP-3}. It is one of the most fundamental systems in modeling the dynamics of plasma. The EP system arises also from the non-relativistic limit of the Euler-Maxwell equations \cite{pengConvergenceCompressibleEulerMaxwell2007,yangNonrelativisticLimitEuler2010,manfrediNonrelativisticLimitsMaxwells2013}.

    Mathematically, as a hyperbolic system, it is well-known that the solutions will fail to remain smooth globally in time generally. For instance, the authors in \cite{baeFormationSingularitiesPlasma2024,baeStructureSingularitiesEulerPoisson2024,wangFormationSingularitiesEuler2014,perthameNonexistenceGlobalSolutions1990,rozanovaRepulsiveEulerPoisson2025,chaeFiniteTimeBlowup2008,wangFormationSingularitiesCompressible1998,yuenBlowupEulerEuler2011} show numerous break down scenarios in the EP system and damping/friction cannot prevent the formation of singularity. In the case of one-dimensional flow, one can establish the critical threshold for the initial data, such that the global smooth solution exists if and only if the critical threshold is not reached \cite{engelbergCriticalThresholdsEulerPoisson2001,tadmorGlobalRegularitySubcritical2008,bhatnagarCriticalThresholdsEulerPoissonalignment2023,bhatnagarCompleteCharacterizationSharp2023,luanEulerPoissonEquationsVariable2025}.

    To obtain a global smooth solution, it is then necessary to consider initial data with certain restriction/smallness. By taking advantage of the dispersive effect of the associated Klein-Gordon effect, the authors in \cite{guoSmoothIrrotationalFlows1998a,guoGlobalSmoothIon2011,germainNonneutralGlobalSolutions2013,guoAbsenceShocksOne2017,ionescuEulerPoissonSystem2013,guoGlobalSolutionsCertain2014,jangTwodimensionalEulerPoissonSystem2012} establish the global smooth solutions near the equilibrium given by $ (\rho_e = M, \vec u_e = 0 ) $ for irrotational flow in various settings. It should be emphasized that since the vorticity is transported by the flow, and no dispersive effect is available, the irrotational condition is necessary for these results. 

    To remove the irrotational condition, one will need to consider the EP system with damping/friction. In a three-dimensional exterior domain, the authors in \cite{liuGlobalSolutionsCompressible2020} establish the asymptotic stability for constant equilibrium with small perturbation.  The damping/friction provides stronger stabilizing effect than the Klein-Gordon effect. In particular, the vorticity in the damped EP system can be damped out asymptotically. 

    We refer to \cite{cordierQuasineutralLimitEulerpoisson2000,liTwofluidEulerPoisson2013,roussetTransverseAsymptoticStability2025,carrilloExistenceRadialGlobal2023,choiLargetimeBehaviorPressureless2025} and the references therein for related works for the repulsive EP system. 

    On the other hand, the EP system with attracting force for gaseous stars etc., demonstrates dramatically different global behavior. In particular, the physical vacuum becomes an important issue and it is more suitable to study the attracting EP system in the setting of the free boundary problem. Along this direction, the stability of expanding solutions is investigated in \cite{hadzicClassGlobalSolutions2019} and the collapsing solution is investigated in \cite{guoLarsonPenstonSelfsimilar2021}. We refer to \cite{Jang2009a,Jang2013,Jang2010a,Coutand2010,Coutand2011a,Coutand2012a} for earlier fundamental works in this direction. 

    Recently, \cite{chenGlobalSolutionsCompressible2024,chenGlobalSolutionsCompressible2025,chenGlobalFiniteEnergySolutions2024} investigates the existence of global weak solutions to the EP system as the vanishing viscosity limit of the Navier-Stokes-Poisson system with or without doping profile. Notably, the authors in \cite{choiLargetimeBehaviorPressureless2025} establishes the one-dimensional global solutions to the damped EP system using the method of characteristics, which is consistent with our result in the one-dimensional case. 

    This paper establishes a new kind of global smooth solutions to the repulsive EP system in the almost pressureless and large friction regime. Only the smallness of the initial velocity $\vec u\vert_{t= 0}$ in the $ H^3 $-norm and the smallness of the gradient of the initial density $ \nabla \rho\vert_{t=0} $ in the $ L^4 $-norm are required. In particular, $ \norm{\rho\vert_{t=0} - M}{H^3} $ is not required to be small, provided that no vacuum is present initially. This is in contrast to \cite{guoSmoothIrrotationalFlows1998a,ionescuEulerPoissonSystem2013,guoAbsenceShocksOne2017,liuGlobalSolutionsCompressible2020}, where both the velocity and the density perturbations are required to be small. The main mechanism of the global smooth solutions is the large damping/friction effect. It should be emphasized, despite the large friction, our problem is a singular limit problem. The damping/friction effect does not provide direct damping in the density, but through the electric force. In addition, there is no regularizing effect from the damping/friction. Moreover, as it will be shown below, the smallness of the initial velocity $ \vec u\vert_{t=0} $ is not as restricted as indicated by the associated spectrum analysis in section \ref{sec:spectrum}.

    Formally, as mentioned before, the leading effect of the EP system \eqref{sys:PLEP-rescaled} as $ \varepsilon \rightarrow 0 $, i.e., the large friction limit, is the hyperbolic-elliptic KS system of consumption type \eqref{sys:PLEP-limit}, which itself draws large attention from the mathematics community. 

    \subsubsection*{The Keller-Segel system}

    The (Patlak-)Keller-Segel system was introduced in \cite{patlakRandomWalkPersistence1953,kellerInitiationSlimeMold1970}, to describe the collective motion of cells that are attracted/repelled by a self-emitted chemical substance. Depending on the sign of the divergence in \eqref{eq:rsPLEPlt-1}, the KS system is either of the consumption type ($-$) or of the production type ($+$). 

    It is easy to see from \eqref{eq:rsPLEPlt-limit} that the solution to the production type KS system will blow up within finite time along the flow trajectory for the hyperbolic-elliptic KS system. There are hence a large amount of literatures focusing on the parabolic-elliptic or parabolic-parabolic KS system of production type, where the finite-time blow up can be suppressed by the diffusion. Indeed, there exists a critical total mass, such that solutions blow up if and only if the initial total mass is in the supercritical regime, where the diffusion is not strong enough to counterbalance the growth. See \cite{eganafernandezUniquenessLongTime2016,blanchetTwodimensionalKellerSegelModel2006,blanchetInfiniteTimeAggregation2008} for the study in two space dimensions. Similar results for the degenerate KS system can be found in \cite{sugiyamaGlobalExistenceSubcritical2006,sugiyamaApplicationBestConstant2007,blanchetCriticalMassPatlak2009}. The proof relies on the best constant for the two-dimensional Hardy-Littlewood-Sobolev inequality. A three-dimensional result on a KS type system can be found in \cite{ulusoyKellerSegelType2017}. Among these works, no blow up profile is constructed. 

    A conditional global stability result was obtained in \cite{feireislConvergenceEquilibriaKeller2007}. Recently, with small initial data, \cite{hsiehLongtimeDynamicsClassical2024} shows the global stability of the constant state. 

    On the other hand, the blow up profile for the KS system in three space dimensions is first obtained in \cite{soupletBlowupProfilesParabolic2019} and later in \cite{baiBlowupProfileKeller2025}. See \cite{liuFiniteTimeBlowup2025} for the blow up profile for the Keller-Segel-Navier-Stokes system. The study of weak solutions in the scaling invariant class can be found in \cite{kozonoExistenceUniquenessTheorem2012}. We refer to \cite{hieberStrongSolutionsKellerSegelNavierStokes2025,naGlobalWellposednessTwodimensional2024,chaeExistenceSmoothSolutions2012,winklerGlobalLargeDataSolutions2012,winklerStabilizationTwodimensionalChemotaxisNavierStokes2014,ahnGlobalClassicalSolutions2021,tanTimePeriodicStrong2021} for the study of the KS system coupled with viscous fluids, and \cite{burgerKellerSegelModel2006,iwabuchiGlobalWellposednessKeller2011,bedrossianLocalGlobalWellposedness2011,perthameTravellingPlateausHyperbolic2011,luckhausMeasureValuedSolutions2012,kiselevSuppressionChemotacticExplosion2016,takeuchiKellerSegelSystemParabolicparabolic2021,takeuchiMaximalLorentzRegularity2021,zhengGlobalClassicalSolutions2021,zhouWellposednessNonuniformDependence2021,elbarDegenerateCahnHilliardIncompressible2022,heEnhancedDissipationBlowUp2023,heIncompressibleLimitsPatlakKellerSegel2023,loanPeriodicSolutionsParabolic2024,shiEnhancedDissipationBlowup2024,takeuchiGlobalWellposednessKellerSegelNavierStokes2025,takeuchiAsymptoticBehaviorGlobal2025} for related works on the KS system.

    Meanwhile, the parabolic-elliptic KS system of consumption type is globally well-posed without vacuum as shown in \cite{gallenmullerCahnHillardKeller2024}. By using the relative entropy method, the authors in \cite{lattanzioGasDynamicsLarge2017,feireislHighFrictionLimit2024,gallenmullerCahnHillardKeller2024,elbarNonlocalEulerKortewegLocal2025} show that any entropy weak solutions of the EP with large friction is asymptotically closed to the solution to the KS system. 

    For the hyperbolic(-elliptic) KS system of production type, as we mentioned before, the solution will blow up in finite time. However, taking into account the logistic/quorum sensitivity, the solution (if exists) will remain bounded. In \cite{perthameExistenceSolutionsHyperbolic2009}, 
    a global weak solution was constructed for the KS system with logistic/quorum sensitivity. In general, however, a smooth solution will form a shock in finite time for such a system, as shown in  \cite{leeThresholdShockFormation2015,mengGlobalWellposednessBlowup2024,naFinitetimeBlowupHyperbolic2024} for both production and consumption types. Notably, the blow up is not the consequence of vacuum, or large (in the Sobolev norm) data, but purely the shock formation similar to Burgers' equation. See also \cite{zhangInitialValueProblem2022,zhouWellposednessNonuniformDependence2021,feiIllposednessHyperbolicKellerSegel2023} for related ill-posedness results. The blow up/formation of singularity of solutions to the KS system \eqref{sys:PLEP-limit}, i.e., without logistic/quorum sensitivity, is generally an open problem. In fact, we show that in the one-dimensional flow, there is no formation of singularity, in contrast to the study of \cite{leeThresholdShockFormation2015}.

    For a fully hyperbolic system, \cite{jeongWellPosednessSingularityFormation2022} shows that even for the KS system of consumption type, there is still finite time blow up for general smooth solutions. Moreover, the local existence in a weighted Sobolev space is established for the initial data with vacuum. 

    The result in this paper indicates that for the hyperbolic-elliptic KS system of consumption type, without vacuum, only smallness of $ \norm{\nabla \rho_0}{L^4} $ in high (two or three) dimensions, and no smallness in the one-dimensional case, is required to guarantee the existence of the global smooth solution. Moreover, the global smooth solution can be obtained as the large friction limit of the almost pressureless EP system. In addition, the asymptotic behavior of the one-dimensional flow with vacuum is investigated in section \ref{sec:1-d-discussion}, below.

    \subsubsection*{Methodology and remarks}

    Instead of using the relative entropy method as in \cite{gallenmullerCahnHillardKeller2024}, where it is required that the solution to the limit KS system is more regular than that to the primitive EP system, we use a direct singular limit method. To be more precise, we introduce an intermediate system of system \eqref{sys:PLEP-rescaled}, where the feature/structure of the KS system is explicit; see \eqref{sys:PLEP-perturbation}, below. At the same time, the singular structure of the system will be separated from the limit system as $ \varepsilon \rightarrow 0 $ and can be controlled in the singular limit regime. The benefit of this method is that the regularity of the solutions to the singular EP system and the limit KS system are exactly the same. It is then clear that, in this case, no shock or any singularity will form in the EP system as $ \varepsilon \rightarrow 0 $. 

    Our initial data is required to be somewhat prepared to have naturally bounded energy in the singular limit regime; see section \ref{sec:l2-energy}. However, this is different from the well-prepared initial data as in the study of the small Mach number limit \cite{Klainerman1982}. As indicated below by the spectrum analysis in section \ref{sec:spectrum}, our initial data is ``large'' in term of order. Moreover, the time derivative is in general not uniformly bounded with respect to $ \varepsilon $; see \eqref{ene:802}, below. This is due to the strong initial layer as $ \varepsilon \rightarrow 0 $, when the damping effect damps out the error between the solutions of the EP system and the KS system within a very small initial time interval. In particular, we do not require the initial energy to be small; see \eqref{initial:000}, below. Therefore we cannot obtain a convergence rate estimate without better prepared data. 

    Our result holds also for the attracting EP system and the KS system of production type, for non-vacuum data and for finite time. Namely, the solutions to the attracting EP system will converge to the solution of the KS system of production type as $ \varepsilon \rightarrow 0 $ in the same setting but the convergence holds only for finite time. This is the best result one can obtain for the high friction limit since both the attracting EP system and the KS system of production type are expected to blow up in finite time with general initial data. 

    Lastly, we remark that the non-vacuum condition, i.e., \eqref{initial:001}, below, is essential for our result. In fact, when vacuum appears, it has been shown in \cite{jeongWellPosednessSingularityFormation2022} that the local well-posedness theory for the KS system is nontrivial and does not fit into the current framework. This can be also seen from the one-dimensional flow as discussed in section \ref{sec:1-d-discussion}. Indeed, any initial vacuum interval will shrink to a vacuum point as $ t \rightarrow \infty $ in the KS system of consumption type,  while the solution approaches the global equilibrium $ \sigma_{e,2} = M $ almost everywhere. Therefore, the (non-weighted) $H^s$-norm, $s\geq 1$, of the solution will grow to infinity as $ t \rightarrow \infty $. Such a behavior of the solution does not fit into our framework for the large friction limit and is left as a future work.

\subsection{Reformulation of system \eqref{sys:PLEP-limit}}
\label{sec:reformulation}

Motivated by \eqref{eq:rsPLEPlt-limit}, we define the Keller-Segel map for system \eqref{sys:PLEP-rescaled} as follows:
\begin{equation}
    \label{def:KS-map}
    \vec v_\rho: \quad \rho \mapsto  - \nabla \phi = - \nabla (-\Delta)^{-1} (\rho -M),
\end{equation}
satisfying 
\begin{equation}
    \label{def:KS-map-2}
     \vec v_\rho + \nabla \phi = 0.
\end{equation}
Furthermore, let 
\begin{equation}
    \label{def:perturbation-u}
    \vec u := \varepsilon \vec v_\rho + \varepsilon^{\alpha} \vec w.
\end{equation}
Then $ (\rho, \vec w) $ satisfies the following system:
\begin{subequations}
    \label{sys:PLEP-perturbation}
    \begin{align}
        \label{eq:ptb-01}
        \dtau \rho + \frac{1}{\varepsilon^{1-\alpha}} \dv(\rho \vec w) + \dv (\rho \vec v_\rho) & = 0, & \text{in} \ \Omega \times (0,\infty), \\
        \label{eq:ptb-02}
        \varepsilon^\alpha \rho \dtau \vec w + \varepsilon^{\alpha -1} \rho \vec u \cdot \nabla \vec w + \frac{1}{\varepsilon^{1-\alpha}}\nabla p(\rho) + \frac{1}{\varepsilon^{2-\alpha}} \rho \vec w & = -  \varepsilon \rho \dtau \vec v_\rho - \rho \vec u \cdot \nabla \vec v_\rho, & \text{in} \ \Omega \times (0,\infty).
    \end{align}
\end{subequations}

\begin{remark}
    Our ansatz \eqref{def:perturbation-u} is not an indication of $ \vec w = \mathcal O(1) $, but is designed to write system \eqref{sys:PLEP-perturbation} in a symmetric form, which simplifies our presentation below. 
\end{remark}

\subsection{Main results}
\label{sec:main-results}
We consider initial data $ (\rho_0, \vec w_0 ) $ to system \eqref{sys:PLEP-perturbation}, satisfying 
\begin{equation}
    \label{initial:000}
    \norm{\varepsilon^{\alpha/2} \vec w_0}{H^3} + \norm{\rho_0-M}{H^3} < \infty. 
\end{equation}
Moreover, let $ \underline\rho, \overline\rho$ be constants such that
\begin{equation}
    \label{initial:001}
    0 < \underline\rho < \rho_0 < \overline \rho,  \qquad \underline\rho < M < \overline \rho, \qquad \forall \vec x \in \Omega. 
\end{equation}
Here $ H^3 = H^3(\Omega) $ is the standard Sobolev space and $ \Omega = \mathbb R^3 \, \text{or} \, \mathbb T^3 $.

In addition,  $ \rho_0 $ satisfies
\begin{equation}
    \label{initial:002}
    \int_{\Omega} (\rho_0 - M) \idx = 0.
\end{equation}

We work on the classical solution to system \eqref{sys:PLEP-perturbation}, defined as follows:
\begin{definition}[Classical solution] 
    For any $ T \in (0,\infty) $, 
    we say $ (\rho, \vec w ) $ is a classical solution in the time interval $ (0,T) $ to system \eqref{sys:PLEP-perturbation} if 
    \begin{equation}
        \label{def:classical-sol}
        \rho, \, \vec w \in L^\infty(0,T;H^3(\Omega)) \qquad \text{and} \qquad \dtau \rho, \, \dtau \vec w \in L^\infty(0,T;H^2(\Omega)),
    \end{equation}
    and $ (\rho, \vec w) $ satisfies system \eqref{sys:PLEP-perturbation}. 
\end{definition}

Our first main theorem is concerning the uniform-in-$\varepsilon$ regularity of the classical solution, as follows: 

\begin{theorem}[Uniform regularity]
    \label{thm:uniform}
    Let $ 0 < \alpha < 2 $.
    Consider system \eqref{sys:PLEP-perturbation} equipped with initial data $ (\rho_0, \vec w_0) $ satisfying \eqref{initial:000}--\eqref{initial:002}. 
    \begin{enumerate}[label = (\roman*), ref= \ref{thm:uniform} (\roman*)]
        \item \label{thm:uniform-1} There exist $ T \in (0,\infty) $ and small enough $ \varepsilon_1 \in (0,1) $ such that, for all $ \varepsilon \in (0,\varepsilon_1) $, the unique classical solution with initial data $ (\rho_0, \vec w_0) $ to system \eqref{sys:PLEP-perturbation} exists for all $ \tau \in (0,T) $. Furthermore, the solution $ (\rho, \vec w) $ satisfies
        \begin{equation}
            \label{est:thm-01}
            \sup_{0\leq \tau \leq T} \bigl( \norm{\varepsilon^{\alpha/2} \vec w(\tau)}{H^3}^2 + \norm{\rho(\tau)-M}{H^3}^2 \bigr) + \int_0^T \bigl( \norm{\varepsilon^{\alpha/2-1}\vec w(s)}{H^3}^2 + \norm{\rho(s)-M}{H^3}^2 \bigr) \,ds \leq \mathcal C_1 < \infty,  
        \end{equation}
        for some $ \mathcal C_1 \in (0,\infty) $ depending only on the initial data. 

        \item \label{thm:uniform-2} If in addition, $ \norm{\nabla \rho_0}{L^4} $ is small enough, there exist $ \mathfrak c_0 $ and small enough $ \varepsilon_2 \in (0,1) $ such that for all $ \varepsilon \in (0,\varepsilon_2) $, the unique classical solution with initial data $ (\rho_0, \vec w_0) $ to system \eqref{sys:PLEP-perturbation} exists for all $ \tau \in (0,\infty) $. Furthermore, the solution $ (\rho, \vec w) $ satisfies
        \begin{equation}
            \label{est:thm-02}
            \begin{aligned}
                & \sup_{0\leq \tau < \infty}  e^{\mathfrak c_0 \tau}  \bigl( \norm{\varepsilon^{\alpha/2} \vec w(\tau)}{H^3}^2 + \norm{\rho(\tau)-M}{H^3}^2  \bigr)  \\
                & \qquad + \int_0^\infty e^{\mathfrak c_0 \tau} \bigl( \norm{\varepsilon^{\alpha/2-1} \vec w(s)}{H^3}^2 + \norm{\rho(s)-M}{H^3}^2  \bigr) \,ds \leq \mathcal C_2 < \infty,
            \end{aligned}
        \end{equation}
        for some $ \mathcal C_2 \in (0,\infty) $ depending only on the initial data. Moreover, there exist $ \mathfrak c_1 \in (0,\infty) $ and $ \mathfrak c_2 \in (0,\infty)$ such that
        \begin{equation}
            \sup_{0\leq \tau < \infty} \bigl( e^{\mathfrak c_1 \tau} \norm{\rho(\tau)-M}{L^\infty} + e^{\mathfrak c_2 \tau} \norm{\nabla\rho(\tau)}{L^4} \bigr) \leq \mathcal C_3 < \infty, 
        \end{equation}
        for some $ \mathcal C_3 \in (0,\infty) $ depending only on the initial data. 
    \end{enumerate}

\end{theorem}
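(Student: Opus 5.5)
The plan is a continuity (bootstrap) argument for \eqref{sys:PLEP-perturbation}, built on $\varepsilon$-weighted $H^3$ energy estimates. For fixed $\varepsilon$, local existence of a unique classical solution is classical: the system is a symmetrizable hyperbolic system with zeroth-order damping and a nonlocal but smoothing coupling through $\vec v_\rho$. So the task is to prove a priori bounds that are uniform in $\varepsilon$. I will use the energy functional
\begin{equation*}
E(\tau):=\sum_{|k|\le 3}\int_\Omega \Bigl(\frac{p'(\rho)}{\rho}|\partial^k(\rho-M)|^2+\varepsilon^\alpha\rho|\partial^k\vec w|^2\Bigr)\idx
\end{equation*}
and the dissipation
\begin{equation*}
D(\tau):=\norm{\varepsilon^{\alpha/2-1}\vec w}{H^3}^2+\norm{\rho-M}{H^3}^2 .
\end{equation*}
The structural point is the symmetric pairing of the singular terms $\varepsilon^{\alpha-1}\dv(\rho\vec w)$ and $\varepsilon^{\alpha-1}\nabla p(\rho)$. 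After applying $\partial^k$, testing \eqref{eq:ptb-01} with $\frac{p'(\rho)}{\rho}\partial^k(\rho-M)$ and \eqref{eq:ptb-02} with $\partial^k\vec w$, the leading $\varepsilon^{\alpha-1}$ terms cancel after integration by parts. The remaining commutators carry $\varepsilon^{\alpha-1}$, but they can be written as $\varepsilon^{\alpha/2}\cdot\varepsilon^{\alpha/2-1}\vec w$ times lower-order factors. Hence they are bounded by $\varepsilon^{\alpha/2}C(E)D$, which is absorbed for $\varepsilon$ small because $\alpha>0$.

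The friction term gives $\varepsilon^{\alpha-2}\norm{\vec w}{H^3}^2$, which is exactly the first part of $D$. Damping of $\rho$ comes from the Keller--Segel part. Writing $\dv(\rho\vec v_\rho)=\vec v_\rho\cdot\nabla\rho+\rho(\rho-M)$ (using $\dv\vec v_\rho=\rho-M$) gives, at each derivative order, the good term $\int \frac{p'(\rho)}{\rho}\,\rho\,|\partial^k(\rho-M)|^2\ge c\,\underline\rho\norm{\partial^k(\rho-M)}{L^2}^2$. This needs a lower bound on $\rho$. That bound is propagated along the flow: the maximum principle for \eqref{eq:rsPLEPlt-limit} keeps $\rho$ inside $(\underline\rho,\overline\rho)$, and the $\vec w$-perturbation costs only $\varepsilon^{\alpha/2-1}\norm{\vec w}{H^3}$, which is integrable in time, times a factor $\varepsilon^{1-\alpha/2}$ that is small since $\alpha<2$. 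The transport term $\vec v_\rho\cdot\nabla\partial^k\rho$ is handled by integration by parts. Its commutators $\partial^k\vec v_\rho\cdot\nabla\rho$ are controlled by elliptic regularity, $\norm{\nabla\vec v_\rho}{H^3}\lesssim\norm{\rho-M}{H^3}$, which uses \eqref{initial:002}. The right-hand side of \eqref{eq:ptb-02} contains $\varepsilon\rho\dtau\vec v_\rho$. By \eqref{eq:ptb-01}, $\dtau\vec v_\rho=-\nabla(-\Delta)^{-1}\dtau\rho$ involves $\varepsilon^{\alpha-1}\rho\vec w$, so $\varepsilon\dtau\vec v_\rho$ is $O(\varepsilon^\alpha\vec w)+O(\varepsilon)$. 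Tested against $\vec w$, both pieces are absorbed into $D$ by Young's inequality with the weight $\varepsilon^{\alpha-2}$.

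These steps yield $\frac{d}{d\tau}E+cD\le C(1+E)^m\,\norm{\nabla\rho}{W^{1,\infty}}\,E+\varepsilon^{\beta}C(E)D$ for some $\beta>0$. Part (i) then follows by Gronwall on a short interval $T$ depending on $E(0)$, with $\varepsilon_1$ chosen so that the $\varepsilon^\beta$ term is absorbed.

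The main obstacle is part (ii). The cubic term $\int|\nabla\rho||\partial^3\rho|^2$ competes with the damping $c\norm{\partial^3\rho}{L^2}^2$, and it is not small unless $\nabla\rho$ is small in a suitable norm. My first attempt is a separate $L^4$ estimate for $\nabla\rho$. Differentiating \eqref{eq:rsPLEPlt-limit} gives
\begin{equation*}
\dtau\nabla\rho+\vec v_\rho\cdot\nabla\nabla\rho+\nabla\vec v_\rho\cdot\nabla\rho+(2\rho-M)\nabla\rho=\text{($\vec w$-terms)},
\end{equation*}
and since $\rho\to M$ the coefficient $2\rho-M$ is eventually near $M>0$. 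I would combine this with exponential decay of $\norm{\rho-M}{L^\infty}$, which comes from the ODE $\dot\rho=-\rho(\rho-M)$ along characteristics and holds globally because $\rho\ge\underline\rho$. Together with $\norm{\nabla\vec v_\rho}{L^4}\lesssim\norm{\rho-M}{L^4}$, a bootstrap keeps $\norm{\nabla\rho}{L^4}$ small and exponentially decaying. Finally, I interpolate the bad cubic term through Gagliardo--Nirenberg, roughly $\norm{\nabla\rho}{L^\infty}\lesssim\norm{\nabla\rho}{L^4}^{\theta}\norm{\rho-M}{H^3}^{1-\theta}$, so the smallness of $\norm{\nabla\rho}{L^4}$ lets the cubic term be absorbed. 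This gives $\frac{d}{d\tau}E+cD\le0$ globally, hence $E\lesssim e^{-\mathfrak c_0\tau}$. A continuity argument closes all bootstrap hypotheses, and the weighted integral bound follows by multiplying by $e^{\mathfrak c_0\tau}$.
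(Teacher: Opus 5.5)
Your architecture matches the paper's: the same weighted energy (your weight $p'(\rho)/\rho=\gamma\rho^{\gamma-2}$), the symmetric cancellation of the $\varepsilon^{\alpha-1}$ terms, damping of $\rho-M$ through the Keller--Segel term, characteristics for the bounds on $\rho$, and separate decay estimates for $\norm{\rho-M}{L^\infty}$ and $\norm{\nabla\rho}{L^4}$. The gap is in the closing step of part (ii), where you claim all cubic terms can be absorbed, so that $\frac{d}{d\tau}E+cD\le 0$ for all $\tau$.

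Several cubic terms carry $\rho-M$ itself rather than a derivative of $\rho$:
\begin{itemize}
\item the top Leibniz term $(\rho-M)\partial^3\rho$ in $\partial^3(\rho(\rho-M))$;
\item the time derivative of the weight combined with the transport term, which gives $\frac{\gamma}{2}(3-\gamma)\rho^{\gamma-2}(\rho-M)|\partial^3\rho|^2$ (the paper's $I_5$).
\end{itemize}
Analogous terms appear at lower orders. Nothing in \eqref{initial:001} makes $\rho_0-M$ small, so these terms are of order one, and smallness of $\norm{\nabla\rho}{L^4}$ does nothing for them. Collecting them at order three, the net coefficient in front of $\gamma\rho^{\gamma-2}|\partial^3\rho|^2$ is $\rho+\frac{\gamma-1}{2}(\rho-M)$. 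This is negative wherever $\rho<\frac{\gamma-1}{\gamma+1}M$, which the data allow because $\underline\rho$ may be arbitrarily small. So $\frac{d}{d\tau}E+cD\le 0$ fails in general at early times, and you cannot read off exponential decay of $E$ from it.

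The missing idea is the paper's mechanism. Do not absorb these terms; keep them on the right as $C(\norm{\rho-M}{L^\infty}+\norm{\nabla\rho}{L^4})\,E$. The paper uses Gagliardo--Nirenberg, \eqref{est:101-07-2}--\eqref{est:101-07-3}, to put them in this form. Then use that this coefficient is integrable on $(0,\infty)$. You already have exponential decay of $\norm{\rho-M}{L^\infty}$ from the characteristic ODE, which needs no smallness, and of $\norm{\nabla\rho}{L^4}$ from the $L^4$ estimate. Gronwall then bounds the energy by $E(0)\exp\bigl(C\int_0^\infty(\norm{\rho-M}{L^\infty}+\norm{\nabla\rho}{L^4})\,ds\bigr)$. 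Running this on the weighted functional $\sup_{s\le\tau}e^{\mathfrak c_0 s}E+\int_0^\tau e^{\mathfrak c_0 s}D\,ds$ with $\mathfrak c_0$ small gives the decay, as in \eqref{ene:601}--\eqref{ene:605}.

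In this scheme, the smallness of $\norm{\nabla\rho_0}{L^4}$ is used to close the $L^4$ bootstrap against the quadratic term coming from $\nabla\vec v_\rho\cdot\nabla\rho$. That term needs an $L^\infty$ bound on $\nabla\vec v_\rho$, as in \eqref{est:KS-map-2}. The bound $\norm{\nabla\vec v_\rho}{L^4}\lesssim\norm{\rho-M}{L^4}$ you quote does not control it.

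Three smaller points:
\begin{itemize}
\item $\norm{\nabla\rho}{W^{1,\infty}}$ is not controlled by $H^3$ in three dimensions. Your inequality for part (i) should involve only $\norm{\nabla\rho}{L^\infty}$, which is all the standard commutator estimates need.
\item In the characteristic argument the small factor is $\varepsilon^{\alpha/2}$, since $\varepsilon^{\alpha-1}=\varepsilon^{\alpha/2}\varepsilon^{\alpha/2-1}$, not $\varepsilon^{1-\alpha/2}$.
\item $\int_0^\infty\norm{\varepsilon^{\alpha/2-1}\vec w}{H^3}\,ds$ is finite only through the exponentially weighted dissipation. So the upper and lower bounds on $\rho$ must sit inside the same bootstrap as the weighted energy.
\end{itemize}
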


\begin{remark}
    We emphasize that no uniform-in-$ \varepsilon $ estimate is available for the time derivative; see \eqref{ene:802}. This is due to the singular limit problem we are considering, and is a indicator of the initial layer for our solutions. 
\end{remark}

\begin{proof}
    Theorem \ref{thm:uniform-1} is the consequence of the estimates in sections \ref{sec:l2-energy}, \ref{sec:h3-energy}, \ref{sec:local-estimate}, and \ref{sec:closing}. Theorem \ref{thm:uniform-2} is the consequence of the estimates in sections \ref{sec:l2-energy}, \ref{sec:h3-energy}, \ref{sec:decay-l-infty-4}, \ref{sec:global-estimate}, and \ref{sec:closing}. 
\end{proof}

Our second main theorem is concerning the asymptotic limit, i.e. large friction limit, as follows: 

\begin{theorem}[Large friction limit]
    \label{thm:limit}
    Under the same assumptions as in theorem \ref{thm:uniform}, there exists a $ \sigma \in L^\infty(0,T;H^2)\cap L^2(0,T;H^3) $ with $ \dtau \sigma \in L^2(0,T;H^2) $, such that as $ \varepsilon \rightarrow 0 $, one has that 
\begin{align}
    \label{thm:limit-1}
    \rho - M  \stackrel{*}{\rightharpoonup} & ~ \sigma - M && \text{in} ~ L^\infty(0,T;H^3);\\
    \label{thm:limit-2}
    \rho - M {\rightharpoonup} & ~ \sigma - M && \text{in} ~ L^2(0,T;H^3); \\
    \label{thm:limit-3}
    \rho - M \rightarrow & ~ \sigma - M && \text{in} ~ C(0,T;H^2_\mrm{loc}); \\
    \label{thm:limit-4}
    \dtau \rho \rightharpoonup & ~ \dtau \sigma && \text{in} ~ L^2(0,T;H^2).
\end{align}
Here $ T \in (0,\infty] $ is the existence time of $ (\rho,\vec w) $ from theorem \ref{thm:uniform}. Moreover, $ \sigma $ solves the limiting system \eqref{sys:PLEP-limit}, and $ \sigma(\tau = 0) = \rho_0 $. 
    
\end{theorem}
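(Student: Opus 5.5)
We prove Theorem \ref{thm:limit} by compactness, using the uniform bound \eqref{est:thm-01} (or \eqref{est:thm-02} when $T=\infty$, applied on every finite interval) from Theorem \ref{thm:uniform}.

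\textbf{Uniform bound on $\dtau\rho$.} This is the first step and the only genuinely new estimate. From \eqref{eq:ptb-01},
\begin{equation*}
\dtau \rho = -\varepsilon^{\alpha-1}\dv(\rho\vec w) - \dv(\rho \vec v_\rho).
\end{equation*}
\begin{itemize}
\item The term $\dv(\rho\vec v_\rho)$ is bounded in $L^\infty(0,T;H^2)$. Indeed $\nabla \vec v_\rho = -\nabla^2(-\Delta)^{-1}(\rho-M)$ is a Riesz-type transform of $\rho-M$. On $\mathbb R^3$, $\vec v_\rho$ itself lies in $L^6\cap \dot H^1$ and higher derivatives are in $H^3$. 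The products are handled by the algebra property of $H^2$ together with the $L^\infty$ bound on $\rho$.
\item For the singular term, write $\varepsilon^{\alpha-1}\vec w = \varepsilon^{\alpha/2}\cdot \varepsilon^{\alpha/2-1}\vec w$. The dissipation part of \eqref{est:thm-01} gives $\varepsilon^{\alpha/2-1}\vec w\in L^2(0,T;H^3)$ uniformly, and $\varepsilon^{\alpha/2}\le 1$. Hence $\varepsilon^{\alpha-1}\dv(\rho\vec w)$ is bounded in $L^2(0,T;H^2)$, and in fact it is $O(\varepsilon^{\alpha/2})$ there.
\end{itemize}
So $\dtau\rho$ is bounded in $L^2(0,T;H^2)$. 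This is why the theorem only claims weak $L^2$ convergence for $\dtau\rho$, consistent with the initial layer remark.

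\textbf{Extraction of a subsequence.}
\begin{itemize}
\item By Banach--Alaoglu, along a subsequence $\rho-M$ converges weak-$*$ in $L^\infty(0,T;H^3)$ and weakly in $L^2(0,T;H^3)$.
\item $\dtau\rho$ converges weakly in $L^2(0,T;H^2)$.
\item The limits are consistent, since distributional limits are unique; call the common limit $\sigma - M$.
\item Since $H^3_{\rm loc}\hookrightarrow H^2_{\rm loc}$ compactly and $\dtau\rho$ is bounded in $L^2(0,T;H^2)$, the family is equicontinuous in time with values in $H^2$. The Aubin--Lions--Simon lemma, applied on balls and followed by a diagonal argument (or directly on $\mathbb T^3$), gives \eqref{thm:limit-3}. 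In particular $\sigma(0)=\rho_0$.
\item For $T=\infty$, repeat on $(0,n)$ and diagonalize.
\end{itemize}

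\textbf{Identification of the limit.} Since the limit $\sigma$ turns out to be the unique classical solution of \eqref{sys:PLEP-limit}, the whole family converges and not just a subsequence. The steps are as follows.
\begin{itemize}
\item Pass to the limit in the weak form of \eqref{eq:ptb-01} against $\varphi\in C_c^\infty$.
\item The term $\varepsilon^{\alpha-1}\int\rho\vec w\cdot\nabla\varphi$ tends to zero by the $O(\varepsilon^{\alpha/2})$ bound above.
\item For $\dv(\rho\vec v_\rho)$, the strong local convergence of $\rho$ in $H^2$ gives $\rho\to\sigma$ locally uniformly. Also $\vec v_\rho\to\vec v_\sigma$ weakly, because $\vec v_\rho$ is a linear bounded map of $\rho-M$. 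The product therefore converges in distributions.
\item This shows $\sigma$ satisfies \eqref{eq:rsPLEPlt-1} with $\psi=(-\Delta)^{-1}(\sigma-M)$.
\item Zero mean, needed for $(-\Delta)^{-1}$ on $\mathbb T^3$, is preserved.
\item Uniqueness for \eqref{sys:PLEP-limit} in this regularity class holds by a standard $L^2$ energy estimate on the difference of two solutions, using $\nabla\sigma\in L^\infty$ and the boundedness of $\nabla(-\Delta)^{-1}$ from $L^2$ to $\dot H^1$.
\end{itemize}

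\textbf{Main obstacle.} The main difficulty is the control of the singular flux $\varepsilon^{\alpha-1}\rho\vec w$. It is handled solely through the time-integrated dissipation in \eqref{est:thm-01}, which is available because $\alpha<2$ gives the right scaling.

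There is a secondary subtlety on $\mathbb R^3$. The nonlocal $\vec v_\rho$ is only in $L^6$ and not in $L^2$, so the nonlinear term must be treated locally. This is why convergence is stated in $H^2_{\rm loc}$.

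Finally, the statement $\sigma\in L^\infty(0,T;H^2)$ is a weakening of what we obtain: the weak-$*$ limit actually lies in $L^\infty(0,T;H^3)$, by lower semicontinuity.
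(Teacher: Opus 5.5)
Your proposal is correct and follows essentially the same route as the paper: the uniform bound \eqref{est:thm-01}, a uniform $L^2(0,T;H^2)$ bound on $\dtau\rho$ read off from \eqref{eq:ptb-01}, Aubin--Lions compactness, and the fact (the paper's \eqref{ene:810}) that the singular flux $\varepsilon^{\alpha-1}\dv(\rho\vec w)$ is $O(\varepsilon^{\alpha/2})$ in $L^2(0,T;H^2)$, so that the distributional limit of \eqref{eq:ptb-01} is \eqref{sys:PLEP-limit}. The paper does not spell out your two extra steps: checking $\sigma(0)=\rho_0$ via the $C(0,T;H^2_{\rm loc})$ convergence, and using uniqueness of the limit problem to get convergence of the whole family rather than a subsequence; in that uniqueness estimate on $\mathbb R^3$, bound the cross term with $\nabla(-\Delta)^{-1}\delta\in L^6$ and $\nabla\sigma\in L^3$ rather than $\nabla\sigma\in L^\infty$, since $\nabla(-\Delta)^{-1}\delta$ need not lie in $L^2$.
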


\begin{proof}
    The proof of theorem \ref{thm:limit} is the consequence of section \ref{sec:convergence}.
\end{proof}

As a corollary, we also have the following result for the one-dimensional flow:
\begin{corollary}
    \label{cor:1-d}
    The results of theorem \ref{thm:uniform-2} and theorem \ref{thm:limit} hold for the one dimensional flow with $ H^3 $ replaced by $ H^2 $ and $ \norm{\nabla \rho_0}{L^4} = \mathcal O(1) $, i.e., without the smallness assumption on $ \norm{\nabla\rho_0}{L^4} $. 
\end{corollary}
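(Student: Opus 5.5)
The plan is to rerun the energy method behind Theorem \ref{thm:uniform-2} in one space dimension, with $H^2$ in place of $H^3$. The only place the smallness of $\norm{\nabla\rho_0}{L^4}$ entered was in proving decay of $\norm{\rho-M}{L^\infty}$ and $\norm{\nabla\rho}{L^4}$. In 1D we replace that step by an explicit characteristic argument for the limit structure.

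\textbf{Energy estimates.} Set $\Omega=\mathbb R$ or $\mathbb T$. Then $v_\rho=-\partial_x(-\partial_x^2)^{-1}(\rho-M)$, so $\partial_x v_\rho=\rho-M$. This makes the Keller--Segel map local at the level of one derivative: $\norm{v_\rho}{H^{k+1}}\lesssim\norm{\rho-M}{H^k}$, using \eqref{initial:002} on the torus. The $L^2$ and $H^2$ estimates of sections \ref{sec:l2-energy}--\ref{sec:h3-energy} then carry over verbatim. In 1D, $H^2\hookrightarrow W^{1,\infty}$, so $H^2$ is enough to control the commutators and the terms $\rho u\partial_x w$ and $\rho u\partial_x v_\rho$. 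The outcome is a differential inequality for $\mathcal E=\norm{\varepsilon^{\alpha/2}w}{H^2}^2+\norm{\rho-M}{H^2}^2$ with dissipation $\mathcal D=\norm{\varepsilon^{\alpha/2-1}w}{H^2}^2+\norm{\rho-M}{H^2}^2$. The damping of $\rho-M$ comes from the term $\partial_x(\rho v_\rho)=v_\rho\partial_x\rho+\rho(\rho-M)$. As in the 3D argument, closing it needs a positive lower bound $\rho\ge\underline\rho/2$ and control of $\norm{\partial_x\rho}{L^\infty}$, or equivalently $\norm{\partial_x\rho}{L^4}$, uniformly in time, with decay.

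\textbf{Replacing the smallness assumption.} Differentiate \eqref{eq:ptb-01} along the characteristics $\dot X=v_\rho+\varepsilon^{\alpha-1}w$. The density satisfies the Riccati-type equation $\dot\rho=-\rho(\rho-M)-\varepsilon^{\alpha-1}\rho\partial_x w$. Since $\rho>0$ initially, this gives $\rho-M\to0$ exponentially, up to an error of size $\int\varepsilon^{\alpha-1}\norm{\partial_x w}{L^\infty}$, which the dissipation bounds by $\varepsilon^{\alpha/2}\mathcal C$. For $\eta=\partial_x\rho$, differentiating and using $\partial_x v_\rho=\rho-M$ gives
\begin{equation*}
\dot\eta=-(\rho-M)\eta-(2\rho-M)\eta-\eta\,\varepsilon^{\alpha-1}\partial_xw-\varepsilon^{\alpha-1}\partial_x(\rho\partial_xw)
=-(3\rho-2M)\eta+\text{error}.
\end{equation*}
This equation is linear in $\eta$, with no quadratic $\eta^2$ term. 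That is precisely the 1D structure: in 3D the term $\nabla v_\rho\cdot\nabla\rho$ is nonlocal and not sign-definite, which is why smallness was needed there. Once $\rho$ is close to $M$ (after a time $\tau_*$ depending only on $\underline\rho,\overline\rho,M$), the coefficient satisfies $3\rho-2M\ge M/2$. On $[0,\tau_*]$, Gronwall gives a bound depending only on the data, and after $\tau_*$ we get exponential decay of $\norm{\eta}{L^\infty}$, and similarly of $\norm{\eta}{L^4}$ via an $L^4$ estimate weighted along the flow. The error terms involve $\varepsilon^{\alpha-1}\partial_x^2 w\in L^1_\tau L^\infty$, and this is where the $H^2$ control of $w$ is used.

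\textbf{Closing, the main obstacle, and the limit.} Close everything by a continuity argument exactly as in section \ref{sec:closing}: the bootstrap bounds for $\mathcal E$ give $\varepsilon$-small errors in the characteristic ODEs, and these in turn give the $\rho$-bounds that close the energy. The main obstacle is to make the characteristic estimates independent of the (possibly large) size of $\mathcal C_1$ while keeping the errors small in $\varepsilon$. I would first try to bound $\int_0^\infty\varepsilon^{\alpha-1}\norm{w}{W^{2,\infty}}d\tau\lesssim\varepsilon^{\alpha/2}(\int e^{\mathfrak c\tau}\mathcal D)^{1/2}$ under the bootstrap hypothesis, and then choose $\varepsilon_2$ small. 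Finally, Theorem \ref{thm:limit} in 1D follows from the same compactness argument as section \ref{sec:convergence}, with $H^3,H^2$ replaced by $H^2,H^1$; Aubin--Lions is applicable since $\partial_\tau\rho$ is bounded in $L^2H^1$.
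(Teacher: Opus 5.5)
\paragraph{Verdict.} There is a genuine gap: your $\eta$-step does not close at the $H^2$ level, and it is not needed.

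\paragraph{Where the step fails.} Your structural observation is right. In one dimension $\partial_x v_\rho\,\partial_x\rho=(\rho-M)\partial_x\rho$, so the equation for $\eta=\partial_x\rho$ is linear in $\eta$. However, the step you build on it does not close at the $H^2$ level that the corollary asserts. The forcing in the $\eta$-equation contains $\varepsilon^{\alpha-1}\rho\,\partial_x^2 w$. Your sup-norm characteristic argument therefore needs
$\int_0^\infty\varepsilon^{\alpha-1}\norm{\partial_x^2 w}{L^\infty}\,d\tau\lesssim\varepsilon^{\alpha/2}\bigl(\int_0^\infty e^{\mathfrak c\tau}\mathcal D\,d\tau\bigr)^{1/2}$.
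But your $\mathcal D$ only contains $\norm{\varepsilon^{\alpha/2-1}w}{H^2}^2$, and $H^2(\mathbb R)$ does not embed into $W^{2,\infty}$. The direct $L^4$ analogue of \eqref{ene:502} has the same defect, since it needs $\norm{\partial_x^2 w}{L^4}$; in 3D that norm was paid for by $\norm{\vec w}{H^3}$. As written, your argument gives the one-dimensional result with $H^3$, not $H^2$. Note also that the decay \eqref{ene:404} of $\norm{\rho-M}{L^\infty}$ never used smallness. Only the quadratic term $\norm{\nabla\rho}{L^4}^2$ in \eqref{ene:502} did, so your diagnosis of where smallness entered is slightly off.

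\paragraph{The missing idea.} In 1D no estimate on $\partial_x\rho$ beyond the energy itself is needed, so the whole $\eta$-step can be dropped; this is the paper's argument. The norm $\norm{\nabla\rho}{L^4}$ entered \eqref{ene:204} only through $I_{6,2}$, via $\norm{\nabla\vec v_\rho}{L^\infty}\lesssim\norm{\nabla\rho}{L^4}$ in \eqref{est:101-07-5}. That is where the identity $\partial_x v_\rho=\rho-M$ should be used.
\begin{itemize}
\item At the $H^2$ level the corresponding terms are $\partial_x v_\rho\,\partial_x^2\rho=(\rho-M)\partial_x^2\rho$ and $\partial_x^2 v_\rho\,\partial_x\rho=(\partial_x\rho)^2$.
\item The second of these, like the analogue of $I_{6,1}$, is handled by the one-dimensional Gagliardo--Nirenberg bound $\norm{\partial_x\rho}{L^4}^2\lesssim\norm{\rho-M}{L^\infty}\norm{\partial_x^2\rho}{L^2}$.
\item Every remaining term contains $w$. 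Measured against $\norm{\varepsilon^{\alpha/2-1}w}{H^2}$, it carries a positive power of $\varepsilon$ and is absorbed as in \eqref{ene:204}. This includes the terms where $\norm{\partial_x\rho}{L^\infty}$ appears.
\end{itemize}
Hence the only non-small coefficient in the energy inequality is $\norm{\rho-M}{L^\infty}$. Its exponential decay \eqref{ene:404} holds without any smallness, and section \ref{sec:global-estimate} closes with $H^2$. With this replacement, your bootstrap and compactness steps go through as you describe.
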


\begin{proof}
    The corresponding convergence result in the one space dimension follows the same arguments as in section \ref{sec:convergence} once the uniform-in-$\varepsilon$ estimate is established. The latter can be established following the proof of theorem \ref{thm:uniform} with minimal modification; see section \ref{sec:1-d-uniform-est} for more detailed discussion. 
\end{proof}

\subsection{Spectrum analysis}
\label{sec:spectrum}
In this section, we would like to analyze the spectrum of the linear operator associated with system \eqref{sys:PLEP-rescaled}. This provides, from the perspective of spectral theory, the rigidity for the ansatz of $ \vec u $ in \eqref{def:perturbation-u} and the high friction limit $ \varepsilon\rightarrow 0 $. 

We first linearize system \eqref{sys:PLEP-rescaled} around the equilibrium $ (\rho_e, \vec u_e) = (M,0) $. Then one has that
\begin{subequations}
    \label{sys:linear}
    \begin{align}
        \label{leq:cnt}
        \dtau \delta \rho + \frac{1}{\varepsilon} M \dv \delta\vec u & =  0, \\
        \label{leq:mmt}
        M \dtau \delta \vec u + \frac{\gamma}{\varepsilon^{1-\alpha}} M^{\gamma-1} \nabla \delta\rho + \frac{1}{\varepsilon^2} M \delta \vec u + \frac{1}{\varepsilon} M \nabla \delta \phi & = 0, \\
        \label{leq:psn}
        - \Delta \delta \phi & = \delta \rho,
    \end{align}
    where $ (\delta \rho, \delta \vec u) $ are the linearized variables. 
\end{subequations}
Then we look for solutions to \eqref{sys:linear} of the form 
\begin{equation}
    \label{lene:001}
    (\delta \rho (\vec x, \tau), \delta \vec u (\vec x, \tau)) = e^{\lambda\tau} (\zeta(\vec x), \vec U(\vec x)),
\end{equation}
with
\begin{subequations}\label{sys:lene-1}
\begin{align}
\label{lene:002}
\lambda \zeta + \frac{1}{\varepsilon} M \dv \vec U & = 0, \\
\label{lene:003}
\lambda M \vec U + \frac{\gamma}{\varepsilon^{1-\alpha}} M^{\gamma-1} \nabla \zeta + \frac{1}{\varepsilon^2} M \vec U + \frac{1}{\varepsilon} M \nabla (-\Delta)^{-1} \zeta & = 0. 
\end{align}
\end{subequations}
Then one can derive from system \eqref{sys:lene-1} that 
\begin{equation}
    \label{lene:004}
    - ( \varepsilon^2 \lambda^2 + \lambda + M) \zeta = \gamma M^{\gamma-1} \varepsilon^\alpha (-\Delta) \zeta,
\end{equation}
and
\begin{equation}
    \label{lene:005}
    \vec U = - \frac{1}{(1 + \varepsilon^2 \lambda) M} \bigl\lbrack \varepsilon M \nabla (-\Delta)^{-1} \zeta + \varepsilon^{\alpha + 1} \gamma M^{-\gamma-1} \nabla\zeta \bigr\rbrack. 
\end{equation}

Then one can see from \eqref{lene:004} and \eqref{lene:005} that in the asymptotic regime $ \varepsilon \rightarrow 0 $, $ \lambda < 0 $ and therefore the equilibrium is linearly stable. Moreover, for any fixed $ \lambda $, one has that
\begin{equation}
    \label{lene:006}
    \zeta = \mathcal O(1) \qquad \text{and} \qquad \vec U = \mathcal O(\varepsilon) + \mathcal O(\varepsilon^{\alpha+1}).
\end{equation}
Consequently, it is reasonable to assume 
\begin{equation}
    \label{lene:007}
    \delta \rho = \mathcal O(1) \qquad \text{and} \qquad \delta \vec u = \mathcal O(\varepsilon) + \mathcal O(\varepsilon^{\alpha+1}).
\end{equation}
However, our assumption of $ \rho_0 $ and $ \vec w_0 $ in \eqref{initial:000} and the ansatz \eqref{def:perturbation-u} implies that
\begin{equation}
    \label{lene:008}
    \rho\vert_{\tau=0} = \mathcal O(1) \qquad \text{and} \qquad \vec u\vert_{\tau=0} = \mathcal O(\varepsilon) + \mathcal O(\varepsilon^{\alpha/2}), 
\end{equation}
which is weaker than \eqref{lene:007}.

\section{Uniform-in-$\varepsilon$ estimates}
\label{sec:uniform-estimates}

The goal of this section to derive uniform-in-$\varepsilon $ estimates for the solution of system \eqref{sys:PLEP-perturbation}, in particular, the uniform bounds for the density $ \rho $. To simplify the presentation, we make the {\it a priori} assumption that $ \rho $ is uniformly bounded from above, i.e., 
\begin{equation}
    \label{ass-prior:rho}
    0 < \frac{1}{2} \underline{\rho} \leq \rho(x,t) \leq 2 \overline \rho  < \infty. 
\end{equation}
We will verify this assumption in section \ref{sec:closing}. Moreover, all calculation below will be done in $ \mathbb R^3 $, while the calculation in $ \mathbb T^3 $ will be similar with minimal modification and thus is omitted. 

\subsection{The Keller-Segel map}

Thanks to \eqref{def:KS-map}, one has that  $ - \Delta \vec v_\rho = - \nabla(\rho - M) $. Hence, the elliptic regularity theory yields that
for any $ s $ and $ 1 < p < \infty $, we have
\begin{gather}
    \label{est:KS-map}
    \norm{\vec v_\rho}{W^{s+2,p}} \leq C_s \norm{\nabla (\rho - M)}{W^{s,p}}, \\
    \intertext{and, in particular} 
    \label{est:KS-map-2}
    \norm{\nabla \vec v_\rho}{L^\infty} \lesssim \norm{\vec v_\rho}{W^{2,4}} \lesssim \norm{\nabla (\rho - M)}{L^{4}} \lesssim \norm{\rho - M}{H^2}.
\end{gather}

\subsection{The $ L^2 $-estimate}\label{sec:l2-energy}
Taking the $ L^2 $-inner product of \eqref{eq:ptb-02} with $ \vec w $ leads to the following:
\begin{equation}
    \label{est:001}
    \begin{gathered}
        \dfrac{d}{d\tau} \biggl\lbrace \frac{\varepsilon^\alpha}{2} \int \rho \vert \vec w \vert^2 \idx  
        + \frac{1}{\gamma-1} \int \bigl(\rho^\gamma - M^\gamma - \gamma M^{\gamma-1}(\rho - M) \bigr)  \idx \biggr\rbrace + \frac{1}{\varepsilon^{2-\alpha}} \int \rho \vert \vec w \vert^2 \idx \\ =   \int \nabla \rho^\gamma \cdot \vec v_\rho  \idx 
        - \varepsilon \int \rho \dtau \vec v_\rho \cdot \vec w \idx  -\int \rho \vec u \cdot \nabla \vec v_\rho \cdot \vec w \idx =: \sum_{j = 1}^3 I_j,
    \end{gathered}
\end{equation}
where we have used \eqref{eq:ptb-01} to calculate
\begin{equation}
    \label{est:002}
    \begin{gathered}
    \frac{1}{\varepsilon^{1-\alpha}} \int \nabla p(\rho) \cdot \vec w \idx = \frac{1}{\varepsilon^{1-\alpha}} \frac{\gamma}{\gamma-1} \int \nabla \rho^{\gamma-1} \cdot \rho \vec w \idx = \frac{\gamma}{\gamma-1} \int \rho^{\gamma-1} (\dtau \rho + \dv(\rho \vec v_\rho)) \idx \\
    = \frac{1}{\gamma-1} \int \dtau \rho^\gamma \idx + \int \rho^\gamma \dv \vec v_\rho \idx = \dfrac{d}{d\tau} \frac{1}{\gamma-1} \int  \bigl(\rho^\gamma - M^\gamma - \gamma M^{\gamma-1}(\rho - M) \bigr)   \idx \\
    - \int \nabla \rho^\gamma \cdot \vec v_\rho \idx.
\end{gathered}
\end{equation}

\smallskip 

Next, we estimate the terms $ I_j $ for $ j = 1,2,3 $. Thanks to \eqref{def:KS-map} and \eqref{def:perturbation-u}, we have 
\begin{equation}
    \label{est:001-1}
    \begin{gathered}
        \vert I_3 \vert = \vert - \int \rho (\varepsilon \vec v_\rho + \varepsilon^\alpha \vec w) \nabla \vec v_\rho \cdot \vec w \idx \vert \lesssim \varepsilon \overline\rho^{1/2} \norm{\vec v_\rho}{L^2} \norm{\nabla \vec v_\rho}{L^\infty} \norm{\rho^{1/2} \vec w}{L^2} \\
         + \varepsilon^\alpha \norm{\nabla \vec v_\rho}{L^\infty} \norm{\rho^{1/2} \vec w}{L^2}^2 \lesssim \varepsilon \overline{\rho}^{1/2} \norm{\rho-M}{L^2} \norm{\rho-M}{H^2} \norm{\rho^{1/2} \vec w}{L^2} \\
         + \varepsilon^\alpha \norm{\rho-M}{H^2} \norm{\rho^{1/2} \vec w}{L^2}^2 \\
         \lesssim \varepsilon^{2} \norm{\rho - M}{H^2}  \norm{\varepsilon^{\alpha/2-1} \rho^{1/2} \vec w}{L^2}^2 + \varepsilon^{2-\alpha} \overline \rho  \norm{\rho -M}{H^2}\norm{\rho-M}{L^2}^2,
    \end{gathered}
\end{equation}
where we have used \eqref{est:KS-map} and \eqref{est:KS-map-2} and applied Cauchy-Schwartz inequality. 

To estimate $ I_2 $, we notice that, thanks to \eqref{def:KS-map} and \eqref{eq:ptb-01}, we have 
\begin{equation}
    \label{def:dtau-v-rho}
    \dtau \vec v_\rho  = \nabla (-\Delta)^{-1}( \varepsilon^{\alpha - 1} \dv (\rho \vec w) + \dv (\rho \vec v_\rho) ),
\end{equation}
and hence,
\begin{equation}\label{est:001-3}
    \norm{\dtau \vec v_\rho}{L^2} \lesssim \varepsilon^{\alpha - 1} \overline\rho^{1/2} \norm{\rho^{1/2} \vec w}{L^2} + \overline\rho\norm{\rho - M}{L^2}.
\end{equation}
Therefore, we have
\begin{equation}
    \label{est:001-4}
    \begin{gathered}
        \vert I_2 \vert \lesssim \varepsilon \overline\rho^{1/2} \norm{\dtau \vec v_\rho}{L^2} \norm{\rho^{1/2}\vec w}{L^2} \lesssim \varepsilon^{2} \overline\rho  \norm{\varepsilon^{\alpha/2-1} \rho^{1/2} \vec w}{L^2}^2 + \varepsilon^{2-\alpha} \overline\rho^2 \norm{\rho - M}{L^2}^2. 
    \end{gathered}
\end{equation}

To estimate $ I_1 $, we apply integration by parts as follows: 
\begin{equation}
    \label{est:001-5} 
\begin{gathered}
    I_1 + \gamma (\underline\rho/2)^{\gamma-1} \norm{\rho-M}{L^2}^2 \leq I_1 + \gamma \int \rho^{\gamma-1} (\rho - M)^2\idx \\
    = \int \gamma  \rho^{\gamma-1} \nabla (\rho - M) \cdot \vec v_\rho \idx + \gamma \int \rho^{\gamma-1} (\rho - M)^2\idx = - \int \gamma (\rho - M) \nabla \rho^{\gamma-1} \cdot \vec v_\rho \idx  \\
    = - \gamma(\gamma-1) \int \rho^{\gamma-2} (\rho-M) \nabla (\rho - M) \cdot \vec v_\rho \idx  \\
    \lesssim_{\underline\rho, \overline\rho} 
    \norm{\rho-M}{L^\infty} \norm{\nabla(\rho-M)}{H^{-1}} \norm{\vec v_\rho}{H^1} \\
    \lesssim \norm{\rho-M}{L^\infty} \norm{\rho-M}{L^2}^2.
\end{gathered}
\end{equation}

\smallskip

To conclude this section, let $ \mathcal E_0 $ be the energy functional defined by
\begin{equation}
    \label{def:e-0}
    \mathcal E_0(t):= \frac{\varepsilon^\alpha}{2} \int \rho \vert \vec w \vert^2 \idx + \frac{1}{\gamma-1} \int \lbrack \rho^\gamma - M^\gamma - \gamma M^{\gamma-1} (\rho - M) \rbrack \idx.
\end{equation}
Thanks to \eqref{ass-prior:rho}, one has that
\begin{equation}
    \label{est:003}
    \mathcal E_0 {\gtrsim}_{\underline \rho, \overline \rho}   \varepsilon^{\alpha} \norm{\vec w}{L^2}^2 + \norm{\rho - M}{L^2}^2. 
\end{equation}
Meanwhile, let $ \mathcal D_0 $ be the dispersion functional defined by
\begin{equation}
    \label{def:d-0}
    \mathcal D_0(t) := \varepsilon^{\alpha-2} \norm{\vec w}{L^2}^2 + \norm{\rho - M}{L^2}^2 \gtrsim_{\underline \rho, \overline \rho} \mathcal E_0.
\end{equation}

Then from \eqref{est:001}--\eqref{est:001-5}, one can conclude that
\begin{equation}
    \label{ene:004}
    \dfrac{d}{d\tau} \mathcal  E_0(t) + \mathcal D_0(t) \lesssim_{\underline\rho, \overline\rho}  (\varepsilon^2 + \varepsilon^{2-\alpha}) (\norm{\rho - M}{H^2} + 1) \mathcal D_0(t) + \norm{\rho-M}{L^\infty} \mathcal E_0(t). 
\end{equation}

\subsection{The $ H^3 $-estimate}
\label{sec:h3-energy}

In the following, we will abuse the notation and treat $ \partial $ as a generic spatial derivative, regardless of the direction, i.e., $ \partial \in \lbrace \partial_x, \partial_y, \partial_z \rbrace $. Additionally, we omit the combinatorial constants from the Leibniz's rule in the following. This will simplify the presentation but not change the analysis. First, to write down the system of equations for $ (\partial^3 \rho, \partial^3 \vec w) $, from \eqref{sys:PLEP-perturbation}, one can easily obtain
\begin{subequations}
    \label{sys:PLEP-perturbation-ddd}
    \begin{gather}
        \label{eq:ptb-ddd-01}
        \begin{gathered}
            \dtau \partial^3 \rho + (\frac{1}{\varepsilon^{1-\alpha}}\vec w + \vec v_\rho) \cdot \nabla \partial^3 \rho + \frac{1}{\varepsilon^{1-\alpha}} \rho \dv \partial^3 \vec w  + \rho \partial^3 (\rho - M) = - \sum_{l = 1}^3 \partial^l \rho \partial^{3-l}(\rho - M) \\
            - \sum_{l=1}^{3} (\frac{1}{\varepsilon^{1-\alpha}} \partial^l \vec w + \partial^l \vec v_\rho) \cdot \nabla\partial^{3-l} \rho - \frac{1}{\varepsilon^{1-\alpha}} \sum_{l=1}^3 \partial^{l} \rho \dv \partial^{3-l}\vec w, 
        \end{gathered}\\
        \label{eq:ptb-ddd-02}
        \begin{gathered}
            \varepsilon^\alpha \rho \dtau \partial^3 \vec w + \varepsilon^{\alpha-1} \rho \vec u \cdot \nabla \partial^3 \vec w + \frac{\gamma}{\varepsilon^{1-\alpha}}  \rho^{\gamma-1} \nabla \partial^3 \rho + \frac{1}{\varepsilon^{2-\alpha}} \rho \partial^3 \vec w = - \varepsilon \rho \dtau \partial^3 \vec v_\rho - \rho \partial^3 (\vec u \cdot \nabla \vec v_\rho) \\
            - \sum_{l=1}^3 \lbrack \varepsilon^{\alpha-1} \rho \partial^l \vec u \cdot \nabla \partial^{3-l} \vec w + \frac{\gamma}{\varepsilon^{1-\alpha}} \rho \partial^l \rho^{\gamma-2} \nabla \partial^{3-l}\rho \rbrack. 
        \end{gathered}
    \end{gather}

\end{subequations}

Taking the $ L^2 $-inner product of \eqref{eq:ptb-ddd-02} with $ \partial^3 \vec w $ leads to the following:
\begin{equation}
    \label{est:101}
    \begin{gathered}
        \dfrac{d}{d\tau} \biggl\lbrace \frac{\varepsilon^\alpha}{2} \int \rho \vert \partial^3 \vec w \vert^2 \idx + \frac{\gamma}{2} \int \rho^{\gamma-2} \vert \partial^3 \rho \vert^2 \idx  \biggr\rbrace + \frac{1}{\varepsilon^{2-\alpha}} \int \rho \vert \partial^3 \vec w \vert^2 \idx + \gamma \int \rho^{\gamma-1}\vert \partial^3 \rho \vert^2 \idx \\
        = \sum_{j=4}^6 I_j -  \varepsilon \int \rho \dtau \partial^3 \vec v_\rho \cdot \partial^3 \vec w \idx - \int \rho \partial^3(\vec u \cdot \nabla\vec v_\rho ) \cdot \partial^3 \vec w \idx \\
        - \sum_{l=1}^3 \int \varepsilon^{\alpha-1} \rho \partial^l \vec u \cdot \nabla \partial^{3-l} \vec w \cdot \partial^3 \vec w \idx 
        - \sum_{l=1}^3 \int \frac{\gamma}{\varepsilon^{1-\alpha}} \rho \partial^l \rho^{\gamma-2} \nabla \partial^{3-l}\rho \cdot \partial^3 \vec w \idx\\
        =: \sum_{j=4}^{10} I_j,
    \end{gathered}
\end{equation}
where we have used \eqref{eq:ptb-ddd-01} to calculate 
\begin{equation}
    \label{est:102}
    \begin{gathered}
        \int \frac{\gamma}{\varepsilon^{1-\alpha}}  \rho^{\gamma-1} \nabla \partial^3 \rho \cdot \partial^3 \vec w \idx = - \int \frac{\gamma}{\varepsilon^{1-\alpha }} \partial^3 \rho \nabla \rho^{\gamma-1} \cdot \partial^3 \vec w \idx - \int \frac{\gamma}{\varepsilon^{1-\alpha}} \rho^{\gamma-1} \partial^3 \rho  \dv \partial^3 \vec w \idx \\
        \stackrel{\eqref{eq:ptb-ddd-01}}{=} - \int \frac{\gamma}{\varepsilon^{1-\alpha }} \partial^3 \rho \nabla \rho^{\gamma-1} \cdot \partial^3 \vec w \idx + \int \gamma \rho^{\gamma-2} \partial^3 \rho \biggl\lbrack \dtau \partial^3 \rho + (\frac{1}{\varepsilon^{1-\alpha}}\vec w + \vec v_\rho) \cdot\nabla \partial^3 \rho  + \rho \partial^3(\rho - M) \\
        + \sum_{l=1}^3 \partial^l \rho \partial^{3-l}(\rho - M) + \sum_{l=1}^3 (\frac{1}{\varepsilon^{1-\alpha}} \partial^l \vec w + \partial^l \vec v_\rho) \cdot \nabla \partial^{3-l} \rho + \frac{1}{\varepsilon^{1-\alpha}}\sum_{l=1}^3 \partial^l \rho \dv \partial^{3-l}\vec w \biggr\rbrack \idx \\
        = \dfrac{d}{d\tau} \frac{\gamma}{2} \int \rho^{\gamma-2} \vert \partial^3 \rho \vert^2 \idx  + \int \gamma \rho^{\gamma-1} \vert \partial^3 \rho \vert^2 \idx - \int \frac{\gamma}{\varepsilon^{1-\alpha }} \partial^3 \rho \nabla \rho^{\gamma-1} \cdot \partial^3 \vec w \idx  \\
        - \frac{\gamma}{2} \int \underbrace{\lbrace \dtau \rho^{\gamma-2} + \dv \lbrack \rho^{\gamma-2} (\frac{1}{\varepsilon^{1-\alpha}} \vec w + \vec v_\rho)\rbrack \rbrace}_{\stackrel{\eqref{eq:ptb-01}}{=} (3-\gamma) \rho^{\gamma-2}  (\frac{1}{\varepsilon^{1-\alpha}}\dv \vec w + \rho - M)} \vert \partial^3 \rho \vert^2 \idx \\
        + \int \gamma\rho^{\gamma-2} \partial^3 \rho \biggl\lbrack \sum_{l=1}^3 \partial^l \rho \partial^{3-l}(\rho - M) + \sum_{l=1}^3 (\frac{1}{\varepsilon^{1-\alpha}} \partial^l \vec w + \partial^l \vec v_\rho) \cdot \nabla \partial^{3-l} \rho \\
        + \frac{1}{\varepsilon^{1-\alpha}}\sum_{l=1}^3 \partial^l \rho \dv \partial^{3-l}\vec w \biggr\rbrack \idx =:\dfrac{d}{d\tau} \frac{\gamma}{2} \int \rho^{\gamma-2} \vert \partial^3 \rho \vert^2 \idx  + \int \gamma \rho^{\gamma-1} \vert \partial^3 \rho \vert^2 \idx - \sum_{j=4}^{6} I_{j}.
    \end{gathered}
\end{equation}

Next, we estimate the terms $ I_j $ for $ j = 4, \cdots, 10 $. 
Applying H\"older's inequality, the Sobolev embedding inequality, and Young's inequality yields
\begin{align}
    \label{est:101-01}
    & \begin{aligned}
        \vert I_4 \vert \lesssim_{\underline\rho, \overline\rho} & \frac{1}{\varepsilon^{1-\alpha}} \norm{\partial^3 \rho}{L^2} \norm{\partial^3 \vec w}{L^2} \norm{\nabla \rho}{L^\infty} \lesssim \varepsilon^{\alpha/2} \norm{\rho - M}{H^3} \norm{\partial^3\rho}{L^2} \norm{\varepsilon^{\alpha/2-1} \partial^3 \vec w}{L^2}, 
    \end{aligned} \\
    \label{est:101-02}
    & \begin{aligned}
        \vert I_5 \vert \lesssim_{\overline\rho} & \norm{\partial^3 \rho}{L^2}^2 \norm{ \frac{1}{\varepsilon^{1-\alpha}}\dv \vec w + \rho - M}{L^\infty} \lesssim \varepsilon^{\alpha/2} \norm{\rho - M}{H^3} \norm{\partial^3 \rho}{L^2}\norm{\varepsilon^{\alpha/2-1} \vec w}{H^3} \\
        & + \norm{\rho-M}{L^\infty} \norm{\partial^3\rho}{L^2}^2,
    \end{aligned}\\
    \label{est:101-03}
    & \begin{aligned}
        \vert I_7 \vert \lesssim_{\overline \rho} & \varepsilon \norm{\dtau\partial^3\vec v_\rho}{L^2} \norm{\partial^3 \vec w}{L^2} \stackrel{\eqref{def:dtau-v-rho}}\lesssim \varepsilon (\varepsilon^{\alpha -1} \norm{\rho \vec w}{H^3} + \norm{\rho \vec v_\rho}{H^3}) \norm{\partial^3 \vec w}{L^2} \\
        \stackrel{\eqref{est:KS-map}}{\lesssim_{\overline\rho}} & \varepsilon^2 (\norm{\rho-M}{H^3} + 1) \norm{\varepsilon^{\alpha/2-1}\vec w}{H^3}^2 + \varepsilon^{2-\alpha/2} (\norm{\rho - M}{H^3}^2 + 1) \norm{\varepsilon^{\alpha/2-1}\vec w}{H^3},
    \end{aligned}\\
    \label{est:101-04}
    & \begin{aligned}
        \vert I_8 \vert \lesssim_{\overline\rho} & \norm{\vec u}{H^3}\norm{\nabla\vec v_\rho}{H^3} \norm{\partial^3 \vec w}{L^2} \stackrel{\eqref{def:perturbation-u}}{\lesssim}  (\varepsilon \norm{\vec v_\rho}{H^3} + \varepsilon^\alpha\norm{\vec w}{H^3} ) \norm{\nabla\vec v_\rho}{H^3} \norm{\partial^3 \vec w}{L^2} \\
        \stackrel{\eqref{est:KS-map}}{\lesssim} & \varepsilon^{2-\alpha/2} \norm{\rho-M}{H^3}^2 \norm{\varepsilon^{\alpha/2-1}\vec w}{H^3} + \varepsilon^2 \norm{\rho-M}{H^3} \norm{\varepsilon^{\alpha/2-1}\vec w}{H^3}^2,
    \end{aligned}\\
    \label{est:101-05}
    & \begin{aligned}
        \vert I_9 \vert \lesssim_{\overline\rho} & \varepsilon^{\alpha-1}\norm{\vec u}{H^3} \norm{\vec w}{H^3} \norm{\partial^3 \vec w}{L^2} \stackrel{\eqref{def:perturbation-u}}{\lesssim} \varepsilon^{\alpha-1}(\varepsilon \norm{\vec v_\rho}{H^3} + \varepsilon^\alpha\norm{\vec w}{H^3} ) \norm{\vec w}{H^3} \norm{\partial^3 \vec w}{L^2} \\
        \stackrel{\eqref{est:KS-map}}{\lesssim} &  \varepsilon^{2} \norm{\rho-M}{H^2} \norm{\alpha^{\alpha/2 -1}\vec w}{H^3}^2 + \varepsilon^{1+\alpha/2} \norm{\varepsilon^{\alpha/2}\vec w}{H^3} \norm{\alpha^{\alpha/2 -1}\vec w}{H^3}^2,
    \end{aligned}\\
    \label{est:101-06}
    & \begin{aligned}
        \vert I_{10} \vert \lesssim_{\overline\rho} & \varepsilon^{\alpha/2} (\norm{\rho - M}{H^3}^2 + 1 ) \norm{\rho-M}{H^3}^2 \norm{\varepsilon^{\alpha/2-1}\partial^3 \vec w}{L^2}.
    \end{aligned}
\end{align}
Meanwhile, $ I_6 $ can be estimated by
\begin{equation}
    \label{est:101-07}
    \begin{aligned}
    \vert I_6 \vert \lesssim_{\underline\rho, \overline\rho} & \sum_{l=1}^3 \norm{\partial^3 \rho}{L^2} \norm{\partial^l \rho \partial^{3-l}\rho}{L^2} + \sum_{l=1}^3 \norm{\partial^3 \rho}{L^2} \norm{\partial^l \vec v_\rho \cdot \nabla \partial^{3-l}\rho}{L^2} \\
        & + \sum_{l=1}^3 \norm{\partial^3 \rho}{L^2} \norm{\frac{1}{\varepsilon^{1-\alpha}}\partial^l \vec w \cdot \nabla \partial^{3-l}\rho}{L^2} 
         + \sum_{l=1}^3 \norm{\partial^3 \rho}{L^2} \norm{ \frac{1}{\varepsilon^{1-\alpha}}\partial^l \rho \partial^{3-l} \vec w}{L^2} =: \sum_{j=1}^{4}I_{6,j}. 
    \end{aligned}
\end{equation}
One can estimate $ I_{6,3} $ and $ I_{6,4} $ similarly as before:
\begin{equation}
    \label{est:101-07-1}
    \vert I_{6,3} \vert + \vert I_{6,4} \vert \lesssim \varepsilon^{\alpha/2}\norm{\rho - M}{H^3}^2 \norm{\varepsilon^{\alpha/2-1}\vec w}{H^3}.
\end{equation}
The estimates of $ I_{6,1} $ is a bit more involved. Notice that the Gagliardo-Nirenberg inequality implies that
\begin{align}
    \label{est:101-07-2}
    \norm{\nabla\rho}{L^6} \lesssim \norm{\rho-M}{L^\infty}^{2/3} \norm{\nabla^3 \rho}{L^2}^{1/3}, \\
    \label{est:101-07-3}
    \norm{\nabla^2\rho}{L^3} \lesssim \norm{\rho-M}{L^\infty}^{1/3} \norm{\nabla^3 \rho}{L^2}^{2/3}. 
\end{align}
Therefore, applying H\"older's inequality, \eqref{est:101-07-2}, and \eqref{est:101-07-2} in $ I_{6,1} $ implies that
\begin{equation}\label{est:101-07-4}
    \vert I_{6,1} \vert \lesssim_{\overline\rho} \norm{\partial^3\rho}{L^2} \norm{\partial\rho}{L^6}\norm{\partial^2\rho}{L^3} + \norm{\rho-M}{L^\infty} \norm{\partial^3 \rho}{L^2}^2 \\
    \lesssim \norm{\rho-M}{L^\infty} \norm{\partial^3 \rho}{L^2}^2.
\end{equation} 
Similarly, $ I_{6,2} $ can be estimated as follows:
\begin{equation}
    \label{est:101-07-5}
    \begin{aligned}
        \vert I_{6,2} \vert \lesssim_{\overline\rho} & \norm{\partial^3 \rho}{L^2} \norm{\partial \vec v_\rho}{L^\infty} \norm{\nabla\partial^2 \rho}{L^2} + \norm{\partial^3 \rho}{L^2} \norm{\partial^2 \vec v_\rho}{L^6} \norm{\nabla\partial \rho}{L^3} \\
        & + \norm{\partial^3 \rho}{L^2} \norm{\partial^3 \vec v_\rho}{L^3} \norm{\nabla \rho}{L^6} \lesssim  \norm{\partial^3 \rho}{L^2} \norm{\partial \vec v_\rho}{W^{1,4}} \norm{\nabla\partial^2 \rho}{L^2} \\
        & + \norm{\partial^3 \rho}{L^2} \norm{\vec v_\rho}{W^{2,6}} \norm{\nabla\partial \rho}{L^3} 
         + \norm{\partial^3 \rho}{L^2} \norm{\vec v_\rho}{W^{3,3}} \norm{\nabla \rho}{L^6} \\
        \lesssim & \norm{\nabla \rho}{L^4} \norm{\rho-M}{H^3}^2  + \norm{\rho-M}{L^\infty} \norm{\rho - M}{H^3}^2,
    \end{aligned}
\end{equation}
where we have used the elliptic estimate \eqref{est:KS-map} and \eqref{est:101-07-2}--\eqref{est:101-07-3}.  

This finishes the estimate of $ (\partial^3 \rho, \partial^3 \vec w) $. The estimates of $ (\partial^j \rho, \partial^j \vec w) $, $ j =1,2 $, are similar and are omitted here. 

\begin{remark}
    In the one-dimensional case, one has that $ \norm{\partial v_\rho}{L^\infty} = \norm{\rho - M}{L^\infty} $, and hence there is no need for $ \norm{\nabla \rho}{L^4} $ for the estimate of $ I_{6,2} $. See section \ref{sec:1-d-uniform-est} for more details. 
\end{remark}

To conclude, let the high order energy functional be
\begin{equation}
    \label{def:e-1} 
    \mathcal E_1(\tau) := \sum_{j=1}^{3} \biggl\lbrace \frac{\varepsilon^\alpha}{2} \int \rho \vert \nabla^j \vec w \vert^2 \idx + \frac{\gamma}{2} \int \rho^{\gamma-2} \vert \nabla^j \rho \vert^2 \idx \biggr\rbrace,
\end{equation}
and the high order dispersion functional be
\begin{equation}
    \label{def:d-1}
    \mathcal D_1(\tau) := \sum_{j=1}^{3} \biggl\lbrace \frac{1}{\varepsilon^{2-\alpha}} \int \rho \vert \nabla^j \vec w \vert^2 \idx + \gamma \int \rho^{\gamma-1} \vert \nabla^j \rho \vert^2 \idx \biggr\rbrace. 
\end{equation}
Thanks to \eqref{ass-prior:rho}, we have, recalling \eqref{def:e-0} and \eqref{def:d-0}, that the total energy and dispersion functionals satisfy
\begin{align}
    \label{est:201}
    \mathcal E_\mrm{total}(\tau) := & \mathcal E_0(\tau) + \mathcal E_1(\tau) \gtrsim_{\underline \rho, \overline \rho} \norm{\varepsilon^{\alpha/2} \vec w}{H^3}^2 + \norm{\rho - M}{H^3}^2, \\
    \label{est:202}
    \mathcal D_\mrm{total}(\tau) := & \mathcal D_0(\tau) + \mathcal D_1(\tau) \gtrsim_{\underline\rho, \overline\rho} \norm{\varepsilon^{\alpha/2-1}\vec w}{H^3}^2 + \norm{\rho - M}{H^3}^2 \gtrsim_{\underline\rho,\overline\rho} \mathcal E_\mrm{total}(\tau).
\end{align}
Then, collecting \eqref{est:101}--\eqref{est:101-07-5} leads to 
\begin{equation}
    \label{ene:203}
    \begin{gathered}
        \dfrac{d}{d\tau} \mathcal E_1(\tau) + \mathcal D_1 (\tau) \lesssim_{\underline\rho, \overline\rho} (\varepsilon^{\alpha/2} + \varepsilon^{2-\alpha/2}) H(\mathcal E_\mrm{total}(\tau))\mathcal D_\mrm{total}(\tau) \\
        + (\norm{\rho-M}{L^\infty} + \norm{\nabla \rho}{L^4}) \mathcal E_\mrm{total}(\tau).
    \end{gathered}
\end{equation}
Hereafter $ H(\mathcal E_\mrm{total}) $ is a generic polynomial of $ \mathcal E_\mrm{total} $. 
Hence, \eqref{ene:004} and \eqref{ene:203} imply
\begin{equation}
    \label{ene:204}
    \begin{gathered}
        \dfrac{d}{d\tau}  \mathcal E_\mrm{total}(\tau) +  \mathcal D_\mrm{total}(\tau) \lesssim_{\underline \rho, \overline\rho} 
        (\varepsilon^{\alpha/2} + \varepsilon^{2-\alpha}) H(\mathcal E_\mrm{total}(\tau)) \mathcal D_\mrm{total}(\tau) \\
        + (\norm{\rho-M}{L^\infty} + \norm{\nabla \rho}{L^4}) \mathcal E_\mrm{total}(\tau).
    \end{gathered}
\end{equation}

\subsection{Local-in-time estimate}
\label{sec:local-estimate}
Thanks to \eqref{ene:204} and the Sobolev embedding inequality, one can conclude that 
\begin{equation}
    \label{ene:301}
    \dfrac{d}{d\tau}  \mathcal E_\mrm{total}(\tau) +  \mathcal D_\mrm{total}(\tau) \lesssim_{\underline \rho, \overline\rho} 
        (\varepsilon^{\alpha/2} + \varepsilon^{2-\alpha}) H(\mathcal E_\mrm{total}(\tau)) \mathcal D_\mrm{total}(\tau) + H(\mathcal E_\mrm{total}(\tau)). 
\end{equation}
Then for $ \varepsilon $ small enough, there exists $ T \in (0,\infty) $, depending on the initial data, such that
\begin{equation}
    \label{ene:302}
    \sup_{0\leq s  \leq T} \mathcal E_\mrm{total}(s) + \int_0^T \mathcal D_\mrm{total}(s) \,ds \leq 2 \mathcal E_\mrm{total}(0). 
\end{equation}

\subsection{Decay estimate of $ \norm{\rho-M}{L^\infty} $ and $ \norm{\nabla \rho}{L^4} $}
\label{sec:decay-l-infty-4}

Let $ \mathfrak{c}_0 > 0 $ be a constant to be determined later, and let the (time-)weighted energy functional be 
\begin{equation}
    \label{ene:400}
    \mathfrak F(\tau) := \sup_{0\leq s \leq \tau }e^{\mathfrak c_0 s} \mathcal E_\mrm{total}(s) + \int_0^\tau e^{\mathfrak c_0 s} \mathcal D_{\mrm{total}}(s)\,ds.
\end{equation}

\smallskip 

From \eqref{eq:ptb-01}, one can write down
\begin{equation}
    \label{ene:401}
    \dtau (\rho - M) + (\frac{1}{\varepsilon^{1-\alpha}}\vec w + \vec v_\rho) \cdot \nabla (\rho - M) + \rho (\rho - M) + \frac{1}{\varepsilon^{1-\alpha}} \rho \dv \vec w  = 0.
\end{equation}
Direct calculation yields that, for any $ p > 1 $,
\begin{equation}
    \label{ene:402}
    \begin{gathered}
    \dfrac{d}{d\tau} \norm{\rho-M}{L^p} + \underline\rho \norm{\rho-M}{L^p} \lesssim_{\underline\rho, \overline\rho} \frac{1}{\varepsilon^{1-\alpha}} \norm{\dv \vec w}{L^p} + \frac{1}{p} (\frac{1}{\varepsilon^{1-\alpha}} \norm{\dv \vec w}{L^\infty} + 1) \norm{\rho-M}{L^p} \\
    \lesssim \frac{1}{\varepsilon^{1-\alpha}} \norm{\vec w}{H^3} + \frac{1}{p} (\frac{1}{\varepsilon^{1-\alpha}} \norm{\vec w}{H^3} + 1) \norm{\rho-M}{L^p}.
    \end{gathered}
\end{equation}
Sending $ p \rightarrow \infty $ in \eqref{ene:402} leads to 
\begin{equation}
    \label{ene:403}
    \dfrac{d}{d\tau} \norm{\rho-M}{L^\infty} + \underline\rho/2 \norm{\rho-M}{L^\infty} \lesssim \varepsilon^{\alpha/2} \norm{\varepsilon^{\alpha/2-1}\vec w}{H^3}.
\end{equation}
Let $ \mathfrak{c}_1 \in (0,\min\lbrace \underline\rho/2,\mathfrak c_0/2 \rbrace) $. Then one can derive from \eqref{ene:403} that
\begin{equation}
    \label{ene:404}
    \begin{gathered}
    \norm{\rho(\tau)-M}{L^\infty} \lesssim \varepsilon^{\alpha/2} \int_0^\tau e^{\mathfrak c_1 (s- \tau) } \mathcal D_\mrm{total}^{1/2}(s) \,ds + e^{-\mathfrak{c}_1 \tau} \norm{\rho_0 - M}{L^\infty}\\
    \lesssim \varepsilon^{\alpha/2} \biggl( \int_0^\tau e^{2\mathfrak{c}_1(s-\tau) - \mathfrak{c}_0 s}\,ds \biggr)^{1/2}\biggl( \int_0^\tau e^{\mathfrak c_0 s} \mathcal D_\mrm{total}(s)\,ds \biggr)^{1/2} + e^{-\mathfrak{c}_1 \tau} \norm{\rho_0 - M}{L^\infty} \\
    \lesssim \varepsilon^{\alpha/2} e^{-\mathfrak{c_1}\tau}\mathfrak F^{1/2}(\tau) + e^{-\mathfrak{c}_1 \tau}.
\end{gathered}
\end{equation}

Meanwhile, from \eqref{eq:ptb-01}, one can write down, for $ \partial \in \lbrace \partial_x, \partial_y, \partial_z \rbrace $,
\begin{equation}
    \label{ene:501}
\begin{gathered}
    \dtau \partial \rho + (\frac{1}{\varepsilon^{1-\alpha}}\vec w + \vec v_\rho) \cdot \nabla \partial \rho + \rho \partial \rho + (\rho - M)\partial\rho + \frac{1}{\varepsilon^{1-\alpha}} \partial \rho \dv \vec w \\
    + (\frac{1}{\varepsilon^{1-\alpha}} \partial \vec w + \partial \vec v_\rho) \cdot \nabla \rho + \frac{1}{\varepsilon^{1-\alpha}} \rho \dv \partial \vec w = 0.
\end{gathered}
\end{equation}
Then one can directly calculate that, 
\begin{equation}
    \label{ene:502}
    \begin{gathered}
        \dfrac{d}{d\tau} \norm{\partial\rho}{L^4} + \underline \rho/2 \norm{\partial \rho}{L^4} \lesssim_{\underline\rho,\overline\rho} (\norm{\rho-M}{L^\infty} + \frac{1}{\varepsilon^{1-\alpha}} \norm{\nabla \vec w}{L^\infty} + \norm{\nabla \vec v_\rho}{L^\infty})\norm{\nabla\rho}{L^4}\\
        + \frac{1}{\varepsilon^{1-\alpha}} \norm{\nabla^2 \vec w}{L^4} \stackrel{\eqref{est:KS-map-2}}{\lesssim} (\norm{\rho-M}{L^\infty} + \varepsilon^{\alpha/2} \norm{\varepsilon^{\alpha/2-1}\vec w}{H^3} )\norm{\nabla\rho}{L^4} + \norm{\nabla\rho}{L^4}^2\\
        + \varepsilon^{\alpha/2} \norm{\varepsilon^{\alpha/2-1}\vec w}{H^3}.
    \end{gathered}
\end{equation}
Solving \eqref{ene:502} leads to, for any $ \mathfrak c_2 \in (0,\min\lbrace \underline\rho/2, \mathfrak c_0 /2 \rbrace) $, that for some constant $ \mathfrak c_{012} $ depending only on $ \mathfrak c_0, \mathfrak c_1, \mathfrak c_2 $, 
\begin{equation}
    \label{ene:503}
    \begin{gathered}
    e^{\mathfrak c_2 \tau}\norm{\nabla \rho(\tau)}{L^4} \lesssim \norm{\nabla \rho_0}{L^4} e^{\int_0^\tau (\norm{\rho(s) - M}{L^\infty} + \varepsilon^{\alpha/2}\norm{\varepsilon^{\alpha/2-1}\vec w(s)}{H^3} + \norm{\nabla \rho (s)}{L^4} ) \,ds} \\
    + \varepsilon^{\alpha/2} \int_0^\tau e^{\mathfrak c_2 s + \int_0^\tau (\norm{\rho(s') - M}{L^\infty} + \varepsilon^{\alpha/2}\norm{\varepsilon^{\alpha/2-1}\vec w(s')}{H^3} + \norm{\nabla \rho (s')}{L^4} ) \,ds' } \norm{\varepsilon^{\alpha/2-1}\vec w(s)}{H^3} \,ds \\
    \lesssim \norm{\nabla \rho_0}{L^4} e^{\mathfrak c_{012} ( \varepsilon^{\alpha/2} \mathfrak F^{1/2}(\tau) + 1 + \sup_{0\leq s \leq \tau} (e^{\mathfrak c_2 s} \norm{\nabla\rho(s)}{L^4}) )  } \\
    + \varepsilon^{\alpha/2} e^{\mathfrak c_{012} ( \varepsilon^{\alpha/2} \mathfrak F^{1/2}(\tau) + 1 + \sup_{0\leq s \leq \tau} (e^{\mathfrak c_2 s} \norm{\nabla\rho(s)}{L^4}) )  } \mathfrak F^{1/2}(\tau),
    \end{gathered}
\end{equation}
where we have used the following, thanks to \eqref{ene:400} and \eqref{ene:404}: 
\begin{equation}
    \label{ene:504}
    \begin{gathered}
        \int_0^\tau (\norm{\rho(s) - M}{L^\infty} + \varepsilon^{\alpha/2}\norm{\varepsilon^{\alpha/2-1}\vec w(s)}{H^3} + \norm{\nabla \rho (s)}{L^4} ) \,ds \lesssim \int_0^\tau \biggl( \varepsilon^{\alpha/2} e^{-\mathfrak c_1 s} \mathfrak F^{1/2}(s) \\
        + e^{-\mathfrak c_1 s} + \varepsilon^{\alpha/2} e^{-\mathfrak c_0/2 s} e^{\mathfrak c_0/2 s} \mathcal D_\mrm{total}^{1/2}(s) + e^{-\mathfrak c_2 s} e^{\mathfrak c_2 s} \norm{\nabla\rho(s)}{L^4} \biggr) \,ds \\
        \lesssim_{\mathfrak{c}_{012}} \varepsilon^{\alpha/2} \mathfrak F^{1/2}(\tau) + 1 + \sup_{0\leq s \leq \tau} (e^{\mathfrak c_2 s} \norm{\nabla\rho(s)}{L^4}),
    \end{gathered}
\end{equation}
and 
\begin{equation}
    \label{ene:505}
    \int_0^\tau e^{\mathfrak c_2 s} \norm{\varepsilon^{\alpha/2-1}\vec w(s)}{H^3} \,ds \lesssim \int_0^\tau e^{\mathfrak c_2 s - \mathfrak c_0 s/2} e^{\mathfrak c_0 s/2} \mathcal D_\mrm{total}^{1/2} (s)\,ds \lesssim \mathfrak F^{1/2}(\tau). 
\end{equation}
Consequently, for $ \norm{\nabla \rho_0}{L^4} $ and $ \varepsilon $ small enough, one can conclude from \eqref{ene:503} by the continuity argument that
\begin{equation}
    \label{ene:506}
    \norm{\nabla \rho(\tau)}{L^4} \lesssim (\norm{\nabla \rho_0}{L^4} + \varepsilon^{\alpha/2} \mathfrak F^{1/2}(\tau) ) e^{-\mathfrak c_2 \tau + \mathfrak c_{012} (\varepsilon^{\alpha/2} \mathfrak F^{1/2}(\tau) + 1)}.
\end{equation}

\subsection{Global-in-time estimate}
\label{sec:global-estimate}

Thanks to \eqref{est:202} and \eqref{ene:204}, there exists $ \mathfrak c_0 $ small enough such that 
\begin{equation}
    \label{ene:601}
\begin{gathered}
    \dfrac{d}{d\tau} e^{\mathfrak c_0 \tau} \mathcal E_\mrm{total}(\tau) + e^{\mathfrak c_0 \tau} \mathcal D_\mrm{total}(\tau) \lesssim (\varepsilon^{\alpha/2} + \varepsilon^{2-\alpha}) H(\mathcal E_\mrm{total}(\tau)) e^{\mathfrak c_0 \tau} \mathcal D_\mrm{total}(\tau) \\
        + (\norm{\rho-M}{L^\infty} + \norm{\nabla \rho}{L^4}) e^{\mathfrak c_0 \tau} \mathcal E_\mrm{total}(\tau), 
\end{gathered}
\end{equation}
which implies that, recalling $ \mathfrak F $ in \eqref{ene:400}, for some $ \mathfrak c_4 > 0 $, 
\begin{equation}
    \label{ene:602}
\begin{aligned}
    & \dfrac{d}{d\tau} e^{- \mathfrak c_4 \int_0^\tau (\norm{\rho(s)-M}{L^\infty} + \norm{\nabla \rho(s)}{L^4})\,ds} \mathfrak F(\tau) \\
    &\qquad \lesssim (\varepsilon^{\alpha/2} + \varepsilon^{2-\alpha}) e^{- \mathfrak c_4 \int_0^\tau (\norm{\rho(s)-M}{L^\infty} + \norm{\nabla \rho(s)}{L^4})\,ds} H(\mathfrak F(\tau)) \dfrac{d}{d\tau} \mathfrak F(\tau).
\end{aligned}
\end{equation}

Meanwhile, thanks to \eqref{ene:404} and \eqref{ene:506}, one has that
\begin{equation}
    \label{ene:603}
    \int_0^\tau (\norm{\rho(s)-M}{L^\infty} + \norm{\nabla \rho(s)}{L^4}) \,ds \lesssim (\norm{\nabla\rho_0}{L^4} + \varepsilon^{\alpha/2} \mathfrak F^{1/2}(\tau)) e^{\mathfrak c_{012}(\varepsilon^{\alpha/2}\mathfrak F^{1/2}(\tau) + 1)} + 1.
\end{equation}

Therefore, one can integrating \eqref{ene:602} in $ \tau $ and conclude that 
\begin{equation}
    \label{ene:604}
    \mathfrak F(\tau) \lesssim \bigl\lbrack (\varepsilon^{\alpha/2} + \varepsilon^{2-\alpha}) \mathfrak H(\mathfrak F(\tau)) +  \mathfrak F (0) \bigr\rbrack e^{\mathfrak c_4 \lbrack (\norm{\nabla\rho_0}{L^4} + \varepsilon^{\alpha/2} \mathfrak F^{1/2}(\tau)) e^{\mathfrak c_{012}(\varepsilon^{\alpha/2}\mathfrak F^{1/2}(\tau) + 1)} + 1 \rbrack }. 
\end{equation}
Here $ \mathfrak H(\cdot) $ is the primitive of $ H(\cdot) $, and notice that $ \mathfrak F(\tau) $ is non-decreasing ($ \frac{d}{d\tau} \mathfrak F(\tau) \geq 0 $).

To conclude, for $ \varepsilon $ small enough, applying the continuity argument in \eqref{ene:604} yields 
\begin{equation}
    \label{ene:605}
    \mathfrak F(\tau) \leq \mathfrak F(0) e^{\mathfrak c_4 \norm{\nabla \rho_0}{L^4} e^{\mathfrak c_{012}} + \mathfrak c_4},
\end{equation}
for all $ \tau \in (0,\infty) $ under the {\it a prior} assumption \eqref{ass-prior:rho}. 

\subsection{Closing the \textbf{\it a prior} assumption \eqref{ass-prior:rho}} \label{sec:closing}

Now we are ready to close the {\it a prior} assumption \eqref{ass-prior:rho}. 

For local in time estimate, with \eqref{ene:302}, \eqref{ass-prior:rho} can be closed easily by choosing time small enough. We therefore focus on the global in time case. In fact, we will show a stronger version of \eqref{ass-prior:rho} with the bound \eqref{ene:605}. Then following the continuity argument, one can show both \eqref{ene:605} and \eqref{ass-prior:rho} hold for all $ \tau \in (0,\infty) $. 

\smallskip 

From \eqref{eq:ptb-01}, one can write down 
\begin{equation}
    \label{ene:701}
    \dtau \rho + (\frac{1}{\varepsilon^{1-\alpha}}\vec w + \vec v_\rho) \cdot \nabla \rho  + \rho (\rho - M)  + \frac{1}{\varepsilon^{1-\alpha}} \rho \dv \vec w  = 0.
\end{equation}
Let $ \vec X(\tau,\vec x) $ be the flow trajectory growing from $ \forall \vec x $ defined by
\begin{equation}
    \label{ene:702}
    \begin{cases}
        \partial_\tau \vec X(\tau; \vec x) = (\frac{1}{\varepsilon^{1-\alpha}}\vec w + \vec v_\rho) (\vec X(\tau; \vec x), \tau), \\
        \vec X(0;\vec x)=\vec x.
    \end{cases}
\end{equation}
Then along the flow trajectory, from \eqref{ene:701}, one has that
\begin{equation}
    \label{ene:703}
    \begin{gathered}
    \dfrac{d}{d\tau} \rho(\vec X(\tau; \vec x), \tau ) = \partial_\tau \rho (\vec X(\tau; \vec x), \tau ) + \dt \vec X(\tau;\vec x) \cdot \nabla_{\vec x} \rho (\vec X(\tau; \vec x), \tau ) \\
    \leq - \rho (\vec X(\tau; \vec x), \tau ) (\rho (\vec X(\tau; \vec x), \tau ) - M) + 2 \varepsilon^{\alpha/2} \overline \rho\norm{\varepsilon^{\alpha/2-1}\vec w(\tau)}{H^3}.
\end{gathered}
\end{equation}
Then since $ M < \overline \rho $, after integrating \eqref{ene:703} in the time interval when $ \rho(\vec X(\tau;\vec x),\tau) > M $, one can conclude that
\begin{equation}
    \label{ene:704}
\begin{gathered}
    \rho(\vec X(\tau;\vec x),\tau) \leq \max\lbrace \rho_0(\vec x) + 2 \varepsilon^{\alpha/2} \overline\rho \int_0^\tau  e^{-\mathfrak c_0/2} e^{\mathfrak c_0/2} \norm{\varepsilon^{\alpha/2-1}\vec w(s)}{H^3} \,ds, M \rbrace \\
    \leq \overline \rho + 2 \varepsilon^{\alpha/2} \overline \rho  (\int_0^\infty e^{-\mathfrak c_0s}\,ds)^{1/2} \mathfrak F^{1/2}(\tau),
\end{gathered}
\end{equation}
thanks to \eqref{initial:001} and \eqref{ene:400}. 
Therefore, for $ \varepsilon $ small enough, together with \eqref{ene:605}, \eqref{ene:704} implies that for $ \forall (\vec x, \tau) $, 
\begin{equation}
    \label{ene:705}
    \rho(\vec x, \tau) < \frac{3}{2} \overline\rho . 
\end{equation}

To show the lower bound of $ \rho $, one can derive from \eqref{ene:701} that, similar to \eqref{ene:703}
\begin{equation}
    \label{ene:706}
    \begin{gathered}
        \dfrac{d}{d\tau} \rho(\vec X(\tau; \vec x), \tau ) = \partial_\tau \rho (\vec X(\tau; \vec x), \tau ) + \dt \vec X(\tau;\vec x) \cdot \nabla_{\vec x} \rho (\vec X(\tau; \vec x), \tau ) \\
        \geq - \rho (\vec X(\tau; \vec x), \tau ) (\rho (\vec X(\tau; \vec x), \tau ) - M) - \frac{1}{2}\varepsilon^{\alpha/2} \underline \rho\norm{\varepsilon^{\alpha/2-1}\vec w(\tau)}{H^3}.
    \end{gathered}
\end{equation}
Then since $ \underline \rho < M $, integrating \eqref{ene:706} in the time interval when $ \rho(\vec X(\tau;\vec x),\tau) < M $ yields that 
\begin{equation}
    \label{ene:707}
    \begin{gathered}
        \rho(\vec X(\tau;\vec x),\tau) \geq \min\lbrace \rho_0(\vec x) - \frac{1}{2} \varepsilon^{\alpha/2} \underline\rho \int_0^\tau  e^{-\mathfrak c_0/2} e^{\mathfrak c_0/2} \norm{\varepsilon^{\alpha/2-1}\vec w(s)}{H^3} \,ds, M \rbrace \\
    \geq \underline \rho - \frac{1}{2} \varepsilon^{\alpha/2} \underline \rho  (\int_0^\infty e^{-\mathfrak c_0s}\,ds)^{1/2} \mathfrak F^{1/2}(\tau),
    \end{gathered}
\end{equation}
thanks to \eqref{initial:001} and \eqref{ene:400}.
Then thanks to \eqref{ene:605}, for $ \varepsilon $ small enough, one can conclude, together with \eqref{ene:705}, that
\begin{equation}
    \label{ene:708}
    \frac{3}{4} \underline \rho \leq \rho(\vec x, \tau) \leq \frac{3}{2} \overline \rho, 
\end{equation}
which is stronger than \eqref{ass-prior:rho} and thus finishes the proof.

\section{Large friction limit}
\label{sec:convergence}

Without loss of generality, let $ T $ be the existence time of $ (\rho, \vec w) $, satisfying the following uniform-in-$\varepsilon $ bound:
\begin{equation}
    \label{ene:801}
    \begin{aligned}
        \sup_{0\leq \tau \leq T} \bigl( \norm{\varepsilon^{\alpha/2} \vec w(\tau)}{H^3}^2 + \norm{\rho(\tau)-M}{H^3}^2 \bigr) + \int_0^T \bigl( \norm{\varepsilon^{\alpha/2-1}\vec w(s)}{H^3}^2 + \norm{\rho(s)-M}{H^3}^2 \bigr) \,ds < \infty. 
    \end{aligned}
\end{equation}
Then directly using equations \eqref{eq:ptb-01} and \eqref{eq:ptb-02}, thanks to \eqref{ass-prior:rho}, one has that
\begin{equation}
    \label{ene:802}
    \begin{gathered}
    \sup_{0 \leq \tau \leq T} \bigl( \norm{\varepsilon^{1-\alpha/2}\dtau\rho(\tau)}{H^2}^2 + \norm{\varepsilon^{\alpha/2+2}\dtau \vec w(\tau)}{H^2}^2 \bigr) \\
    + \int_0^T \bigl( \norm{\dtau\rho(s)}{H^2}^2 + \norm{\varepsilon^{\alpha/2 + 1}\dtau \vec w(s)}{H^2}^2 \bigr) \,ds < \infty.
    \end{gathered}
\end{equation}
We remind readers that the estimate \eqref{ene:802} implies the existence of a (stronger) initial layer for $ \vec w $ and a (weaker) initial layer for $ \rho $. Therefore, in general, one should not expect strong convergence in system \eqref{sys:PLEP-perturbation}. 

Applying the Aubin-Lions compactness lemma, one can obtain from \eqref{ene:801} and \eqref{ene:802} that as $ \varepsilon \rightarrow 0^+ $,
\begin{align}
    \label{ene:803}
    \rho - M  \stackrel{*}{\rightharpoonup} & ~ \sigma - M && \text{in} ~ L^\infty(0,T;H^3);\\
    \label{ene:804}
    \rho - M {\rightharpoonup} & ~ \sigma - M && \text{in} ~ L^2(0,T;H^3); \\
    \label{ene:805}
    \rho - M \rightarrow & ~ \sigma - M && \text{in} ~ C(0,T;H^2_\mrm{loc}); \\
    \label{ene:806}
    \dtau \rho \rightharpoonup & ~ \dtau \sigma && \text{in} ~ L^2(0,T;H^2);
\end{align}
for some $ \sigma - M \in L^\infty(0,T;H^2)\cap L^2(0,T;H^3) $ with $ \dtau \sigma \in L^2(0,T;H^2) $. 
Meanwhile, let 
\begin{equation}
    \label{ene:807}
    \vec v := - \nabla (-\Delta)^{-1} (\sigma-M). 
\end{equation}
Then thanks to \eqref{def:KS-map} and \eqref{ene:802}, one has that
\begin{align}
    \label{ene:808}
    \vec v_\rho \rightharpoonup & ~ \vec v && \text{in} ~ L^2(0,T;H^4); \\
    \label{ene:809}
    \vec v_\rho \rightarrow & ~ \vec v && \text{in} ~ C(0,T;H^3_\mrm{loc}).
\end{align}

Moreover, one can calculate that, thanks to \eqref{ene:801}, 
\begin{equation}
    \label{ene:810}
    \begin{gathered}
        \int_0^T \norm{\dtau \rho + \dv(\rho \vec v_\rho)}{H^2}^2 \,ds = \int_0^T \norm{\frac{1}{\varepsilon^{1-\alpha}}\dv(\rho \vec w)}{H^2}^2 \,ds \\
        \lesssim \varepsilon^{\alpha} \int_0^T \norm{\varepsilon^{\alpha/2-1} (\vec w \cdot \nabla \rho + (\rho - M) \dv \vec w + M \dv \vec w)}{H^2}^2 \,ds \\
        \lesssim \varepsilon^{\alpha} (\sup_{0\leq \tau \leq T }\norm{\rho(\tau) - M}{H^3}^2 + 1) \int_0^T \norm{\varepsilon^{\alpha/2-1}\vec w(s)}{H^3}^2\,ds  \rightarrow 0
    \end{gathered}
\end{equation}
as $ \varepsilon \rightarrow 0 $. 
Therefore, passing the limit $ \varepsilon \rightarrow 0 $ in \eqref{eq:ptb-01} in the distribution sense yields that
\begin{subequations}
    \label{sys:limit}
\begin{equation}
    \label{ene:811}
    \dtau \sigma + \dv ((\sigma - M) \vec v + M \vec v) = 0 \qquad \text{in} ~ \Omega \times (0,T),
\end{equation}
with
\begin{equation}
    \label{ene:812}
    \vec v = - \nabla (-\Delta)^{-1} (\sigma-M),
\end{equation}
\end{subequations}
which shows \eqref{sys:PLEP-limit}, or equivalently, \eqref{eq:rsPLEPlt-limit}.

\section{One dimensional flow on $ \mathbb R $}
\label{sec:1-d-discussion}

\subsection{Uniform-in-$\varepsilon $ estimate for the one dimensional analogue}
\label{sec:1-d-uniform-est}

The one dimensional analogue of system \eqref{sys:PLEP-perturbation} is given by
\begin{subequations}
    \label{sys:1-d}
    \begin{align}
        \label{eq:1-d-01}
        \dtau \rho + \frac{1}{\varepsilon^{1-\alpha}} \dx (\rho w) + \dx(\rho v_\rho) = & 0, \\
        \label{eq:1-d-02}
        \varepsilon^{\alpha} \rho \dtau w + \varepsilon^{\alpha-1} \rho u \dx w + \frac{1}{\varepsilon^{1-\alpha}} \dx p(\rho) + \frac{1}{\varepsilon^{2-\alpha}} \rho w = & - \varepsilon\rho \dtau v_\rho - \rho u \dx v_\rho,
    \end{align}
    with
    \begin{align}
        \label{eq:1-d-03}
        v_\rho(x,t) =& - \dx (-\partial_{xx})^{-1} (\rho - M)  = \int_{-\infty}^{x} (\rho(y,t)-M)\,dy,\\
        \label{eq:1-d-04}
        u(x,t) = & \varepsilon v_\rho(x,t) + \varepsilon^\alpha w(x,t). 
    \end{align}
\end{subequations}

For the one-dimensional system \eqref{sys:1-d}, one can obtain the uniform-in-$ \varepsilon $ estimate without the smallness condition of $ \norm{\nabla \rho_0}{L^4} $ in \ref{thm:uniform-2}. 

Indeed, recall that the condition of $ \nabla \rho_0 $ is to control the quadratic-in-$ \nabla \rho $ term in \eqref{ene:502}, such that \eqref{ene:506} holds. This is necessary thanks to \eqref{est:101-07-5}, where we have used the estimate $ \norm{\partial \vec v_\rho}{L^\infty} \lesssim \norm{\nabla\rho}{L^4}$. Therefore, the estimate of $ \norm{\nabla\rho}{L^4} $ is a key ingredient for the global-in-time estimate for the three-dimensional problem in section \ref{sec:global-estimate}. 

However, for the one dimensional flow \eqref{sys:1-d}, since $ \partial_x v_\rho = \rho - M $, where $ v_\rho(x,t) := \int_{-\infty}^{x} (\rho(y,\tau) - M)\,dy $, one will only need to control $ \norm{\partial_x v_\rho}{L^\infty} = \norm{\rho - M}{L^\infty} $, and will not need the control of $ \norm{\nabla\rho}{L^4} $ in \eqref{est:101-07-5}, and therefore $ \norm{\nabla \rho}{L^4} $ will not appear in \eqref{ene:204}. Namely, the exponential decay of $ \norm{\rho - M}{L^\infty} $ is sufficient to guarantee the global estimate, repeating the calculation as in section \ref{sec:global-estimate}. Moreover, $ H^2 $ regularity instead of $ H^3 $ is sufficient to close the estimate. 

\subsection{Asymptotic behavior using characteristic method}
\label{sec:asymptotic-behavior-1-d} 

\subsubsection*{The one dimensional hyperbolic-elliptic KS system of consumption type}
\label{sec:1-d-ks}

Sending $ \varepsilon \rightarrow 0 $ in system \ref{sys:1-d}, one will obtain the one dimensional analogue of system \eqref{sys:limit}, which reads 
\begin{subequations}
    \label{sys:1-d-limit}
    \begin{equation}
        \dtau \sigma + \dx(\sigma v) = 0, \label{eq:1-d-limit-01}\\
    \end{equation}
    with
    \begin{align}
        v(x,\tau) =& \int_{-\infty}^{x} (\sigma(y,\tau) - M) \,dy, \label{eq:1-d-limit-02} \\
        \sigma(x,0) =& \sigma_0(x),  \label{eq:1-d-limit-03}
    \end{align}
    satisfying 
    \begin{equation}
        \label{eq:1-d-limit-04}
        \int_{-\infty}^\infty (\sigma_0 - M) \,dx = 0. 
    \end{equation}
\end{subequations}
Equivalently, one can rewrite \eqref{eq:1-d-limit-01} as 
\begin{equation}{\tag{\ref{eq:1-d-limit-01}'}}
    \label{eq:1-d-limit-05}
    \dtau \sigma + v \dx \sigma + \sigma(\sigma - M) = 0.
\end{equation}
Then one can see that \eqref{eq:1-d-limit-05} is an ODE along the trajectory given by the transport velocity $ v $, and there are a stable equilibrium $ \sigma_{e,1} = M $ and an unstable equilibrium $ \sigma_{e,2} = 0 $.

In the case when $  \sigma_0 $ is strictly positive, then along each trajectory, $ \sigma $ will converge to the stable equilibrium $ \sigma_{e,1} = M $. This asymptotic behavior is consistent with the uniform-in-$\varepsilon$ estimate discussed in section \ref{sec:1-d-uniform-est}.

However, if $ \sigma_0 $ is not strictly positive, the regularity of the global solution will not in general be asymptotically bounded. To describe this phenomenon, without loss of generality, consider initial data $ \sigma_0 \in C^1(\mathbb R) $ satisfying  
\begin{equation}
    \label{initial-1-d}
    \begin{aligned}
        \sigma_0 > 0 & && \text{for} \ x < 0, \\
        \sigma_0 = 0 & && \text{for} \ 0 \leq x \leq 1, \\
        \sigma_0 > 0 & && \text{for} \ x > 1,
    \end{aligned}
\end{equation}
where $ (0, 1) \subset \mathbb R $ is the interval of initial vacuum. 

\subsubsection*{Asymptotic behavior along the flow trajectory and the shrinking of the vacuum interval}

Let $ \eta(x,\tau), \ \forall x \in \mathbb{R} $, be the flow trajectory evolving from $ x $; that is
\begin{equation}
    \label{ene:901}
    \begin{cases}
        \dtau \eta(x,\tau) = v(\eta(x,\tau),\tau) = \int_{-\infty}^{\eta(x,\tau)} (\sigma(y, \tau) - M)\,dy, \\
        \eta(x,0) = x.
    \end{cases}
\end{equation}
Then the flow velocity satisfies
\begin{equation}
    \label{ene:902}
\begin{gathered}
    \dfrac{d}{d\tau} v(\eta(x,\tau),\tau) = (\sigma(\eta(x,\tau),\tau) - M) \dtau \eta(x,\tau) + \int_{-\infty}^{\eta(x,\tau)} \dtau \sigma(y,\tau) \,dy\\
    \stackrel{\eqref{eq:1-d-limit-01}}{=} (\sigma(\eta(x,\tau),\tau) - M) v(\eta(x,\tau),\tau) - \sigma(\eta(x,\tau),\tau) v(\eta(x,\tau),\tau) = - M v(\eta(x,\tau),\tau).
\end{gathered}
\end{equation}
This implies that 
\begin{gather}
    \label{ene:903}
    v(\eta(x,\tau),\tau) = e^{-M\tau} v(x,0) = e^{-M\tau} \int_{-\infty}^{x} (\sigma_0(y) - M) \,dy, \\
    \label{ene:903-1} 
    \eta(x,\tau) = x + \int_0^\tau v (\eta(x,s),s) \,ds = x + \frac{1}{M}(1 - e^{-M\tau})\int_{-\infty}^x(\sigma_0(y) - M)\,dy , \\
    \label{ene:904}
    \text{and} \qquad \lim_{\tau\rightarrow \infty} \eta(x,\tau) = x + \frac{1}{M} \int_{-\infty}^{x} (\sigma_0(y) - M)\,dy.
\end{gather}
Moreover, \eqref{eq:1-d-limit-05} can be written as,
\begin{equation}
    \label{ene:905}
    \dfrac{d}{d\tau} \sigma(\eta(x,\tau),\tau) = - \sigma(\eta(x,\tau),\tau) (\sigma(\eta(x,\tau),\tau)- M),
\end{equation}
which is globally well-posed for any fixed $ x $, and 
\begin{equation}
    \label{ene:906}
    \lim_{\tau\rightarrow \infty} \sigma(\eta(x,\tau),\tau) =
    \begin{cases}
    M & \text{if} \ \sigma_0(x) > 0, \\
    0 & \text{if} \ \sigma_0(x) = 0.
    \end{cases}
\end{equation}
In the case when $ \sigma_0(x) > 0 $, i.e., along the {\bf non-vacuum trajectory}, one has furthermore that 
\begin{equation}
    \label{ene:907}
    \sigma(\eta(x,\tau),\tau) \geq \min\lbrace\sigma_0(x),M \rbrace \qquad \text{and} \qquad
    \vert \sigma(\eta(x,\tau),\tau) - M \vert \leq e^{-\min\lbrace \sigma_0(x),M\rbrace \tau} \vert \sigma_0(x) - M \vert;
\end{equation}
In the case when $ \sigma_0(x) = 0 $, i.e., along the {\bf vacuum trajectory}, one has
\begin{equation}
    \label{ene:907-1}
    \sigma(\eta(x,\tau),\tau) \equiv 0.
\end{equation}

In particular, let $ (a(\tau), b(\tau)) $ be the {\bf interval of vacuum} for $ \tau \geq 0 $ for the solution with initial data \eqref{initial-1-d}; that is
\begin{equation}
    \label{ene:908}
    a(\tau) = \eta(0,\tau), \qquad b(\tau) = \eta(1,\tau).
\end{equation}
One can conclude from \eqref{ene:903-1} and \eqref{ene:904} that
\begin{gather}
    \label{ene:909}
    b(\tau) - a(\tau) = e^{-M\tau}, \\
    \label{ene:910}
    \text{and} \qquad \lim_{\tau \rightarrow \infty} a(\tau) = \lim_{\tau \rightarrow \infty} b(\tau) = \frac{1}{M} \int_{-\infty}^0 (\sigma_0(y) - M )\,dy,
\end{gather}
namely, {\bf the interval of vacuum will shrink to a (finite) point exponentially fast} as $ \tau \rightarrow \infty $. 

\subsection*{$ \dx \sigma $ along the flow trajectory}

Finally, we will investigate the asymptotic of $ \dx\sigma $. From \eqref{ene:901}, one can write 
\begin{equation}
    \label{ene:911}
    \begin{cases}
        \dtau \dx \eta(x,\tau) = (\sigma(\eta(x,\tau),\tau) - M) \dx \eta(x,\tau),\\
        \dx \eta(x,0) = 1.
    \end{cases}
\end{equation}
We first discuss the evolution {\bf along the non vacuum trajectory}, i.e., for fixed $ x $ such that $ \sigma_0(x) > 0 $.
Thanks to \eqref{ene:907}, one has that 
\begin{equation}
    \label{ene:912}
    \begin{gathered}
    \log \vert \dx \eta(x,\tau) \vert \leq \int_0^\tau e^{-\min\lbrace \sigma_0(x),M\rbrace s} \vert \sigma_0(x)-M\vert \,ds \\ =\frac{1}{\min\lbrace \sigma_0(x),M\rbrace}(1-e^{-\min\lbrace \sigma_0(x),M\rbrace \tau})\vert \sigma_0(x) - M \vert 
    \stackrel{\tau \rightarrow \infty}{\rightarrow} \frac{1}{\min\lbrace \sigma_0(x),M\rbrace}\vert \sigma_0(x) - M \vert.
    \end{gathered}
\end{equation}
In particular, this implies that, there exists some $ C_x \in (0,\infty) $, such that
\begin{equation}
    \label{ene:913}
    \frac{1}{C_x} < \dx \eta(x,\tau) < C_x < \infty,
\end{equation}
for all $ \tau > 0 $. 

Meanwhile, let 
\begin{equation}
\label{ene:914}
\xi(x,\tau) := \dfrac{d}{dx} \sigma (\eta(x,\tau),\tau) = \dx \eta(x,\tau) \dx\sigma(\eta(x,\tau),\tau).
\end{equation}
Then one can write down from \eqref{ene:905} that 
\begin{equation}
    \label{ene:915}
    \dfrac{d}{d\tau} \xi(x,\tau) = - (2\sigma(\eta(x,\tau),\tau)- M) \xi(x,\tau).
\end{equation}
Thanks to \eqref{ene:907}, one can conclude from \eqref{ene:915} that
\begin{equation}
    \label{ene:916} 
    \lim_{\tau \rightarrow \infty} \xi(x,\tau) = 0.
\end{equation}
Therefore, 
\begin{equation}
    \label{ene:917}
    \dx \sigma(\eta(x,\tau),\tau) = \frac{1}{\dx \eta(x,\tau)}\dfrac{d}{dx} \sigma(\eta(x,\tau),\tau) = \frac{1}{\dx\eta(x,\tau)}  \xi(x,\tau) \stackrel{\tau\rightarrow \infty}{\rightarrow} 0.
\end{equation}

\smallskip 

Now we discuss the evolution {\bf along the vacuum trajectory}, i.e., for fixed $ x $ such that $ \sigma_0(x) = 0 $. Indeed, \eqref{ene:911} is then reduced to 
\begin{equation}
    \label{ene:918}
    \dtau\dx\eta(x,\tau) = - M \dx \eta(x,\tau), \qquad \dx \eta(x,0) = 1, 
\end{equation}
which implies that 
\begin{equation}
    \label{ene:919}
    \dx \eta(x,\tau) = e^{-M\tau}. 
\end{equation}
Meanwhile, \eqref{ene:915} is reduced to
\begin{equation}
    \label{ene:920}
    \dfrac{d}{d\tau} \xi(x,\tau) = M \xi(x,\tau). 
\end{equation}
Therefore, one has that 
\begin{equation}
    \label{ene:921}
    \xi(x,\tau) = e^{M\tau} \xi(x,0) = e^{M\tau}\dx \sigma_0(x).
\end{equation}
Consequently, one has that, similar to \eqref{ene:917}, 
\begin{equation}
    \label{ene:922}
    \dx\sigma(\eta(x,\tau),\tau)= \frac{\xi(x,\tau)}{\dx \eta(x,\tau)} = e^{2M\tau} \dx \sigma_0(x).
\end{equation}
This implies that the $ \norm{\sigma(\tau)}{H^2} $ is growing exponentially in time thanks to the embedding inequality $  \norm{\sigma(\tau)}{H^2}  \gtrsim \norm{\dx \sigma(\tau)}{L^\infty} \simeq e^{2M\tau}  $, provided there exists some point such that $ \dx \sigma_0(x) \neq 0 $ and $ \sigma_0(x) = 0 $ ($ x = 0 $ or $ 1 $ for the initial data in \eqref{initial-1-d} for instance). 

This, in particular, implies that one {\bf should not expect global uniform-in-$\tau$ Sobolev regularity of solutions to the limit system \eqref{sys:1-d-limit} in the case when there exists vacuum initially}. In other words, condition \eqref{initial:001} is necessary for theorem \ref{thm:uniform} and theorem \ref{thm:limit}.

\subsubsection*{High order derivatives of $ \sigma $ along the flow trajectory}

In general, assume that the first nontrivial derivative of $ \sigma_0(x), \ x = 0,1 $ is the $ k$-th derivative, for some $ k \in \mathbb Z^+ $, i.e., 
\begin{equation}
    \label{ene:923}
    \begin{gathered}
    \partial_{x}^j \sigma_0(x) = 0, \quad \forall \ j \leq k - 1, \\
    \partial_x^k \sigma_0(x) \neq 0.
    \end{gathered}
\end{equation}
Such $ k $ always exists for nontrivial data. 
Then from \eqref{ene:905}, one has that
\begin{equation}
    \label{ene:924}
    \begin{gathered}
    \dfrac{d}{d\tau} \vec V_k(x,\tau ) = A(\vec V_k(x,\tau))\vec V_k(x,\tau), \\
    \text{with} \qquad \vec V_k(x,0) = 0,
\end{gathered}
\end{equation}
where $ \vec  V_k(x,\tau) := (\sigma(\eta(x,\tau),\tau), \dfrac{d}{dx}\sigma(\eta(x,\tau),\tau), \cdots, \dfrac{d^{k-1}}{dx^{k-1}}\sigma(\eta(x,\tau),\tau))^\top $ is the tube of derivatives with orders smaller than $ k $, and $ A(\vec V_k(x,\tau)) $ is a non-singular matrix. Then one can derive that, 
\begin{equation}
    \label{ene:925}
    \vec V_k(x,\tau) \equiv 0.
\end{equation}
That is, after applying the chain rule,
\begin{equation}
    \label{ene:926}
    0 \stackrel{\eqref{ene:925}}{=} \dfrac{d^j}{dx^j} \sigma(\eta(x,\tau),\tau) = \partial_x^j \sigma(\eta(x,\tau),\tau) (\dx \eta(x,\tau))^j + \mrm{l.o.t}, \quad \forall \ j \leq k - 1.
\end{equation}
Here, $ \mrm{l.o.t} $ denotes the terms involving only derivatives with orders smaller than $ j $. 
Thanks to \eqref{ene:919}, one can derive from \eqref{ene:926} by induction that
\begin{equation}
    \label{ene:926-1}
    \dx^j \sigma (\eta(x,\tau),\tau) \equiv 0, \qquad \forall j \leq k-1.
\end{equation}

Meanwhile, for the $ k $-th derivative, thanks to \eqref{ene:926}, one can calculate from \eqref{ene:905} that 
\begin{equation}
    \label{ene:927}
    \dfrac{d}{d\tau} \dfrac{d^k}{dx^k} \sigma(\eta(x,\tau),\tau) = M \dfrac{d^k}{dx^k} \sigma(\eta(x,\tau),\tau).
\end{equation} 
Consequently, 
\begin{equation}
    \label{ene:928}
    e^{M\tau} \dx^k \sigma_0(x) \stackrel{\eqref{ene:927}}{=} \dfrac{d^k}{dx^k} \sigma(\eta(x,\tau),\tau) = \partial_x^k \sigma(\eta(x,\tau),\tau) (\dx \eta(x,\tau))^k + \mrm{l.o.t}. 
\end{equation}
Here, $ \mrm{l.o.t} $ denotes the terms involving only derivatives with orders smaller than $ k $. Thanks to \eqref{ene:919} and \eqref{ene:926-1}, one can conclude from \eqref{ene:928} that
\begin{equation}
    \label{ene:929}
    \dx^k\sigma(\eta(x,\tau),\tau) = e^{(k+1)M\tau} \dx^k\sigma_0(x).
\end{equation}

\section*{Data availability statement}
No data is available for this paper.




\end{document}